\documentclass[11pt]{article}%
\usepackage{amssymb}
\usepackage{eurosym}
\usepackage{amsfonts}
\usepackage{amsmath}
\usepackage{graphicx}
\usepackage{epstopdf}%
\setcounter{MaxMatrixCols}{30}
%TCIDATA{OutputFilter=latex2.dll}
%TCIDATA{Version=5.50.0.2960}
%TCIDATA{CSTFile=40 LaTeX article.cst}
%TCIDATA{Created=Thursday, May 07, 2015 16:10:30}
%TCIDATA{LastRevised=Tuesday, December 26, 2017 23:54:32}
%TCIDATA{<META NAME="GraphicsSave" CONTENT="32">}
%TCIDATA{<META NAME="SaveForMode" CONTENT="1">}
%TCIDATA{BibliographyScheme=Manual}
%TCIDATA{<META NAME="DocumentShell" CONTENT="Standard LaTeX\Standard LaTeX Article">}
%BeginMSIPreambleData
\providecommand{\U}[1]{\protect\rule{.1in}{.1in}}
%EndMSIPreambleData
\newtheorem{theorem}{Theorem}
\newtheorem{acknowledgement}[theorem]{Acknowledgement}

\newtheorem{definition}[theorem]{Definition}

\newtheorem{lemma}[theorem]{Lemma}

\newtheorem{proposition}[theorem]{Proposition}
\newtheorem{remark}[theorem]{Remark}

\newenvironment{proof}[1][Proof]{\noindent\textbf{#1.} }{\ \rule{0.5em}{0.5em}}
\begin{document}

\title{Trajectories of a quadratic differential related to a quasi-exactly solvable
sextic oscillator}
\author{M.J.Atia, W.Karrou, M.Chouikhi, and F.Thabet\\University of Gab\`{e}s, Tunisia}
\maketitle

\begin{abstract}
In this paper, we discuss the existence of solution (as Cauchy transform of a
signed measure) of a particular algebraic quadratic equation of the form ;
$z\mathcal{C}^{2}(z)-(z^{2}+\frac{\gamma}{2}z)\mathcal{C}(z)+(z+\frac{\delta
}{4})=0.$ This problem remains to describe the critical graph of a related
meromorphic quadratic differential; in particular, we discuss the existence of
finite critical trajectories of this quadratic differential.

\end{abstract}

\bigskip\textit{2010 Mathematics subject classification: }30C15, 31A35, 34E05.

AMS classification scheme numbers: 13F60, 34M60

\textit{Keywords and phrases: }Quantum theory. WKB analysis. Cauchy transform.
Quadratic differentials.

\section{Introduction}

This paper is a continuation of a collection of works about applications of
the theory of quadratic differentials in quantum theory. In fact, quadratic
differentials have provided an important tool in the asymptotic study of some
solutions of algebraic equations. In quantum theory, trajectories of some
quadratic differentials have crucial role in the WKB analysis.

We consider the eigenvalue problem
\begin{equation}
-y^{\prime\prime}+V_{s,m}(x)y=\lambda y,\text{ } \label{sch eq}%
\end{equation}
where the potential $V_{s,J}(x)$ is given by :%
\[
V_{s,m}(x)=\frac{(4s-1)(4s-3)}{x^{2}}+(x^{6}-(4s+4m-2)x^{2}).
\]
This problem was studied by A.Turbiner \cite{ref01},\cite{ref3}. For
$s\in\left]  \frac{1}{4},\frac{3}{4}\right[  $, there is an attractive
centrifugal term. For $s\in\left]  -\infty,\frac{1}{4}\right[  \cup$ $\left]
\frac{3}{4},+\infty\right[  ,$ the centrifugal term is repulsive. For
$s=\frac{1}{4}$ or $\frac{3}{4},$ the centrifugal core term disappears leaving
a sextic oscillator
\[
V_{p,m}(x)=\text{ }x^{6}-(4m+p)x^{2};\text{ }p\in\left\{  -1,1\right\}
\]
We will study the case $s=\frac{1}{4}$ (so $p=-1$), with a perturbed
potential
\[
V_{-1,m+1}(x)=V_{m}(x)=\text{ }x^{6}+\gamma m^{1/2}x^{4}+(\frac{\gamma^{2}%
m}{4}-(4m+3))x^{2},\text{ }%
\]
with boundray condition $y(\pm\infty)=0.$This problem is exactly solvable:
that means that for every $\gamma$ there exist $m+1$ eigenfunctions of the
form
\[
\phi(x)=q_{m}(x)e^{-\frac{x^{4}}{4}-\frac{\gamma m^{1/2}}{4}x^{2}},
\]
where $q_{m}$ is an even-parity polynomial of degree $2m$, corresponding to
$m+1$ eigenvalues $\lambda_{m};$ see \cite{ref2}. Note that in the case $p=1$
we have the same form of eigenfunctions with an odd-parity polynomials; for
more details see again \cite{ref2}.

Substitiuting in the Schr\"{o}dinger equation (\ref{sch eq}), we find that
\[
-q_{m}^{\prime\prime}(x)+(2x^{3}+\gamma m^{\frac{1}{2}}x)q_{m}^{\prime
}(x)-(4mx^{2}-\frac{\gamma m^{1/2}}{2})q_{m}(x)=\lambda_{m}q_{m}(x).
\]
The differential operator%
\[
K=-\frac{d^{2}}{dx^{2}}+(2x^{3}+\gamma m^{1/2}x)\frac{d}{dx}-(4mx^{2}%
-\frac{\gamma m^{1/2}}{2})
\]
preserves the $(m+1)$-dimentional linear space of all even polynomials of
degree $\leq2m$. Using $z=x^{2},$ we find that
\begin{equation}
-4zq_{m}^{\prime\prime}(z)+(4z^{2}+2\gamma m^{1/2}z-2)q_{m}^{\prime
}(z)-(4mz-\frac{\gamma m^{1/2}}{2})q_{m}(z)=\lambda_{m}q_{m}(z). \label{eq2}%
\end{equation}

Our main goal is the study of the asymptotic root-counting measure
$\vartheta_{m}$ of an appropriate re-scaled polynomial sequence $\left\{
Q_{m}(z)=q_{m}(m^{\varepsilon}z)\right\}  .$ Let $\nu_{m}$ be the normalized
root-counting measure of the sequence $\left(  q_{m}\right)  $. The Cauchy
transform $C_{\nu_{m}}$ and $C_{\vartheta_{m}}$ of respectively $\nu_{m}$ and
$\vartheta_{m}$ satisfies the following equations :
\[
4zmC_{\nu_{m}}^{2}(z)-(4z^{2}+2\gamma m^{1/2}z-2)C_{\nu_{m}}(z)+\frac
{(4mz-\frac{\gamma m^{1/2}}{2}+\lambda_{m})}{m}+4zC_{\nu_{m}}^{\prime}(z)=0.
\]%
\[
4zm^{1-\varepsilon}C_{\vartheta_{m}}^{2}(z)-(4m^{\varepsilon}z^{2}+2\gamma
m^{1/2}z-2m^{-\varepsilon})C_{\vartheta_{m}}(z)+\frac{(4m^{1+\varepsilon
}z-\frac{\gamma m^{1/2}}{2}+\lambda_{m})}{m}+4m^{-\varepsilon}zC_{\vartheta
_{m}}^{\prime}(z)=0.
\]
For $\varepsilon=1/2,$ we get
\begin{equation}
-4zC_{\vartheta_{m}}^{2}(z)+(4z^{2}+2\gamma z-\frac{2}{m})C_{\vartheta_{m}%
}(z)-(4z-\frac{\gamma}{2m}+\frac{\lambda m}{m^{3/2}})-4z\frac{C_{\vartheta
_{m}}^{\prime}(z)}{m}=0. \label{eqdiff1}%
\end{equation}
It was shown in \cite{ref4}, that the sequence $\left(  \lambda_{m}%
/m^{4/3}\right)  $ is bounded. By the Helly selection Theorem, we may assume
that
\[
\underset{m\rightarrow\infty}{\lim}\left(  \lambda_{m}/m^{4/3}\right)
=\delta,
\]
and then, there exists a compactly-supported positive measure $\nu$ such that
\[
\underset{m\rightarrow\infty}{\lim}\vartheta_{m}=\nu,\underset{m\rightarrow
\infty}{\lim}C_{\vartheta_{m}}\mathcal{=C}_{\nu}=\mathcal{C}.
\]
Finally, taking the limits in (\ref{eqdiff1}), we obtain the algebraic
equation :%
\begin{equation}
z\mathcal{C}^{2}(z)-(z^{2}+\frac{\gamma}{2}z)\mathcal{C}(z)+(z+\frac{\delta
}{4})=0. \label{algeq}%
\end{equation}

\bigskip In this paper, we discuss the existence of solutions of equation
(\ref{algeq}) as Cauchy transform of compactly-supported signed measure.
Section \ref{connection alg}, we make the connection between this a algebraic
equation and a particular quadratic differential. In section \ref{qudar dif},
we describe the critical graph of the related quadratic differential in the
Riemann sphere $\overline{%
%TCIMACRO{\U{2102} }%
%BeginExpansion
\mathbb{C}
%EndExpansion
},$ more precisely, we discuss the number of its finite critical trajectories.

\section{\bigskip A quadratic differential \label{qudar dif}}

\bigskip Below, we describe the critical graphs of the the family of quadratic
differentials
\begin{equation}
\varpi_{q}=-\frac{q\left(  z\right)  }{z}dz^{2}, \label{qd}%
\end{equation}
where $q$ is a monic polynomial of degree $3.$ We begin our investigation by
some immediate observations from the theory of quadratic differentials. For
more details, we refer the reader to \cite{Strebel},\cite{jenkins},.

Recall that \emph{finite critical points }of a given meromorphic quadratic
differential $-Q\left(  z\right)  dz^{2}$ on the Riemann sphere $\overline{%
%TCIMACRO{\U{2102} }%
%BeginExpansion
\mathbb{C}
%EndExpansion
}$ are its zeros and simple poles; poles of order $2$ or greater then 1 called
\emph{infinite critical points. }All other points of $\overline{%
%TCIMACRO{\U{2102} }%
%BeginExpansion
\mathbb{C}
%EndExpansion
}$ are called \emph{regular points}.

\emph{Horizontal trajectories} (or just trajectories) of the quadratic
differential are the zero loci of the equation%
\[
-Q\left(  z\right)  dz^{2}>0,
\]
or equivalently%
\begin{equation}
\mathcal{\Re}\int^{z}\sqrt{Q\left(  t\right)  }\,dt=\text{\emph{const}}.
\label{eq traj}%
\end{equation}
If $z\left(  t\right)  ,t\in%
%TCIMACRO{\U{211d} }%
%BeginExpansion
\mathbb{R}
%EndExpansion
$ is a horizontal trajectory, then the function
\[
t\longmapsto\Im\int^{t}\sqrt{Q\left(  z\left(  u\right)  \right)  }z^{\prime
}\left(  u\right)  du
\]
is monotone.

The \emph{vertical} (or, \emph{orthogonal}) trajectories are obtained by
replacing $\Im$ by $\Re$ in equation (\ref{eq traj}). The horizontal and
vertical trajectories produce two pairwise orthogonal foliations of the
Riemann sphere $\overline{%
%TCIMACRO{\U{2102} }%
%BeginExpansion
\mathbb{C}
%EndExpansion
}$.

A trajectory passing through a critical point is called \emph{critical
trajectory}. In particular, if it starts and ends at a finite critical point,
it is called \emph{finite critical trajectory }or\emph{\ short trajectory},
otherwise, we call it an \emph{infinite critical trajectory}. A short
trajectory is called \emph{unbroken }if it does not pass through any finite
critical points except its two endpoints. The closure the set of finite and
infinite critical trajectories is called the \emph{critical graph}.

A necessary condition for the existence of a short trajectory connecting
finite critical points is the existence of a Jordan arc $\gamma$ connecting
them, such that
\begin{equation}
\Re\int_{\gamma}\sqrt{Q\left(  t\right)  }dt=0. \label{cond necess}%
\end{equation}
However, this condition is sufficient in general; see counter-example in
\cite{F.Thabet}.

The local structure of the trajectories is as follow :

\begin{itemize}
\item At any regular point, horizontal (resp. vertical) trajectories look
locally as simple analytic arcs passing through this point, and through every
regular point passes a uniquely determined horizontal (resp. vertical)
trajectory; these horizontal and vertical trajectories are locally orthogonal
at this point.

\item From each zero of multiplicity $r$, there emanate $r+2$ critical
trajectories spacing under equal angle $2\pi/\left(  r+2\right)  $.

\item At a simple pole, there emanates exactly one horizontal trajectory.

\item At the pole of order $r>2$, there are $r-2$ asymptotic directions
(called \emph{critical directions}) spacing under equal angle $2\pi/\left(
r-2\right)  $ and a neighborhood $\mathcal{U}$, such that each trajectory
entering $\mathcal{U}$ stays in $\mathcal{U}$ and tends to the pole in one of
the critical directions. See Figure \ref{FIG1}.
\end{itemize}

\begin{figure}[tbh]
\begin{minipage}[b]{0.28\linewidth}
\centering\includegraphics[scale=0.25]{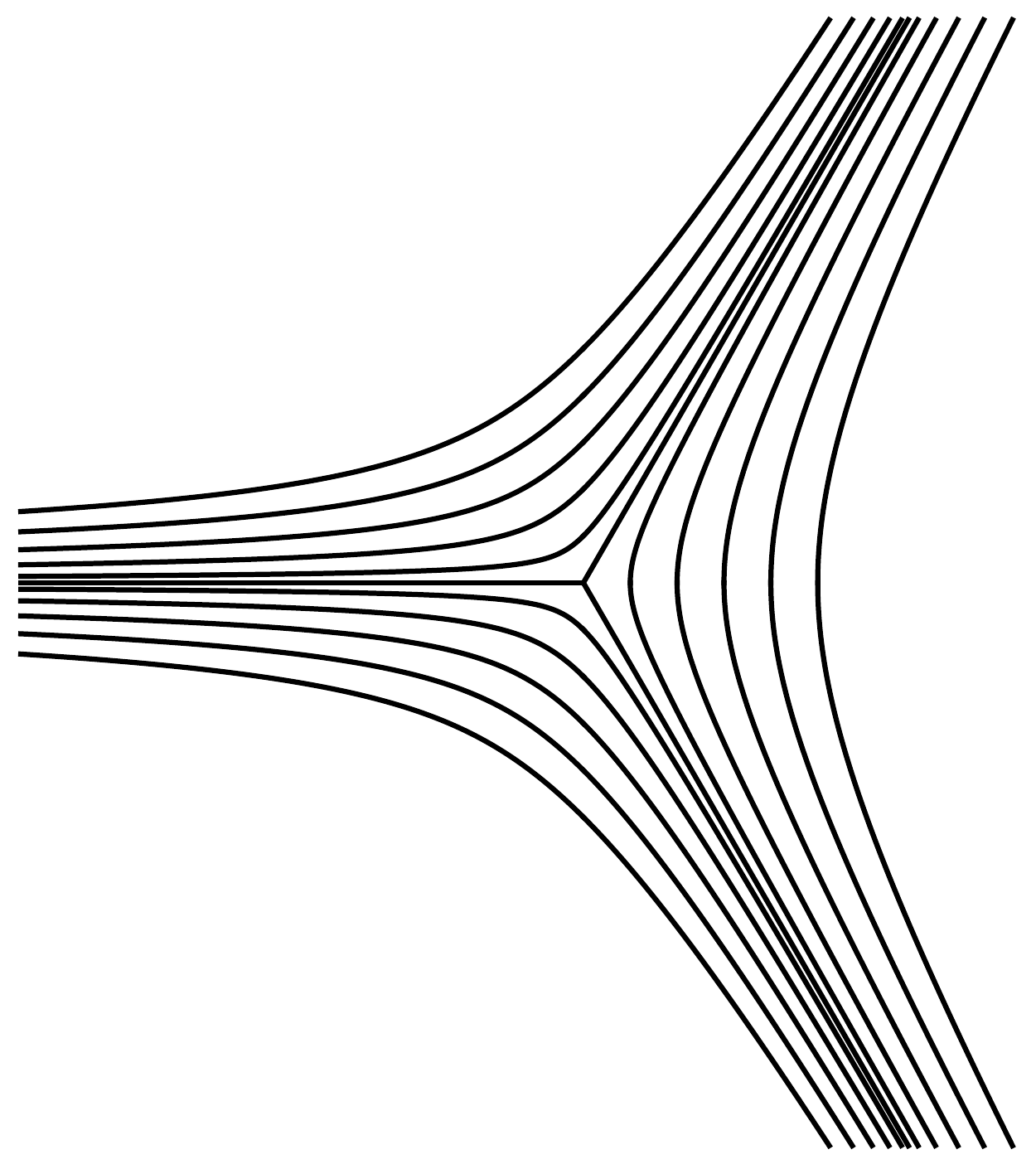}
\end{minipage}\hfill
\begin{minipage}[b]{0.28\linewidth} \includegraphics[scale=0.25]{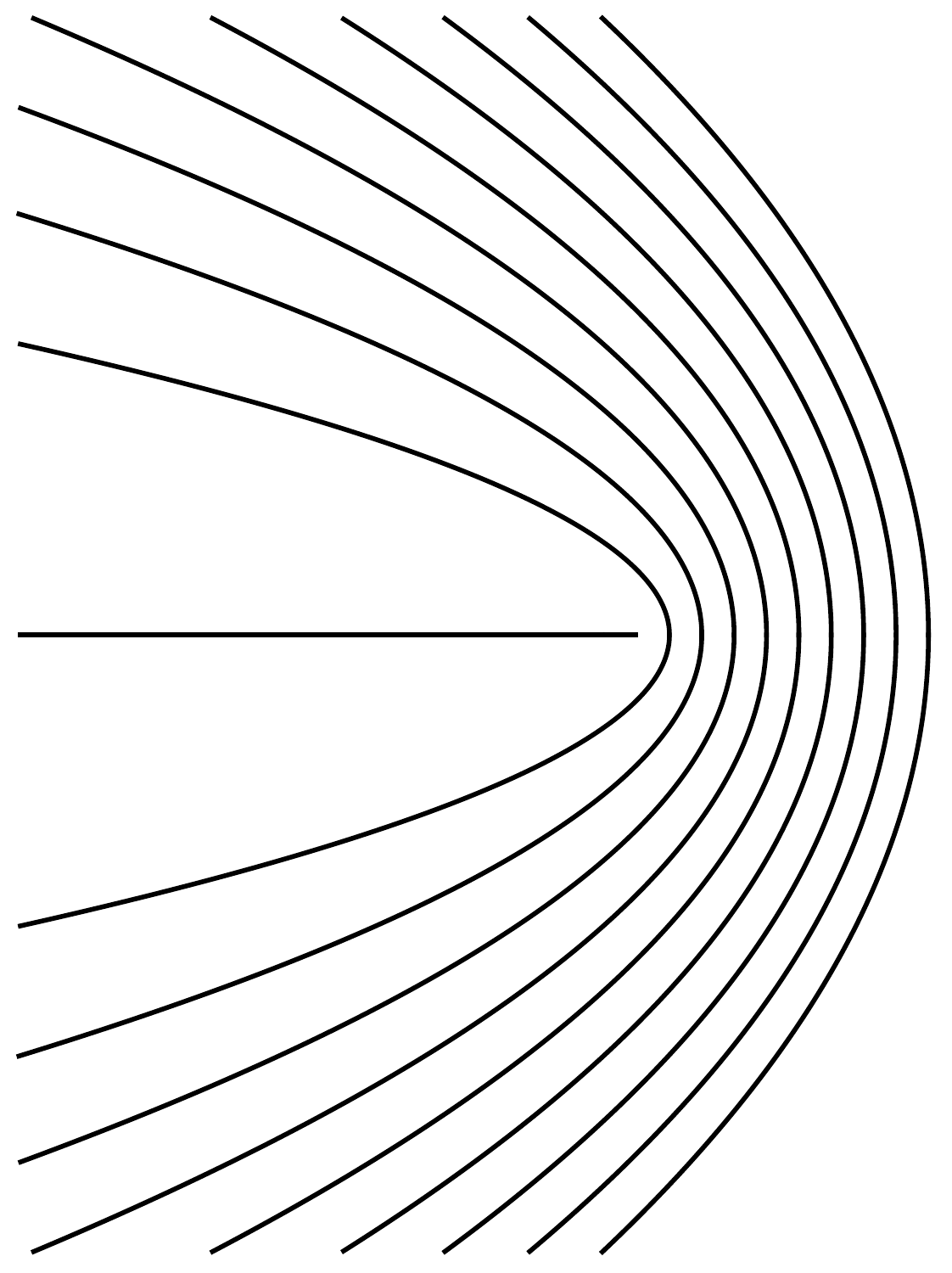}
\end{minipage}
\hfill\begin{minipage}[b]{0.28\linewidth} \includegraphics[scale=0.3]{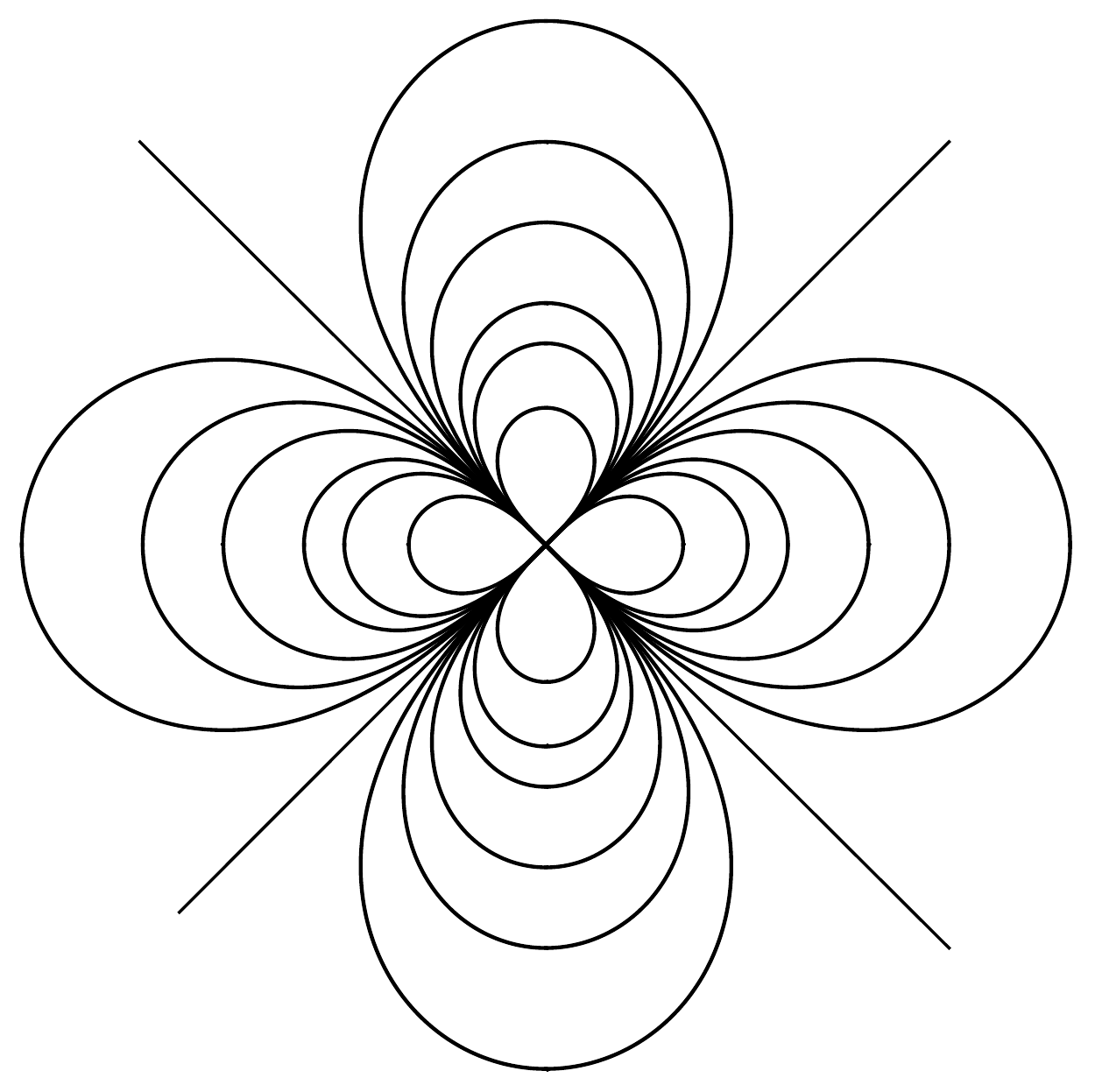}
\end{minipage}
\caption{Structure of the trajectories near a simple zero (left), a simple
pole (center), and a pole of order 6 (right).}%
\label{FIG1}%
\end{figure}

\bigskip A very helpful tool that will be used in our investigation is the
Teichm\"{u}ller lemma (see \cite[Theorem 14.1]{Strebel}).

\begin{definition}
\bigskip A domain in $\overline{%
%TCIMACRO{\U{2102} }%
%BeginExpansion
\mathbb{C}
%EndExpansion
}$ bounded only by segments of horizontal and/or vertical trajectories of
$\varpi_{q}$ (and their endpoints) is called $\varpi_{q}$-polygon.
\end{definition}

\begin{lemma}
[Teichm\H{u}ller]\label{teich lemma} Let $\Omega$ be a $\varpi_{q}$-polygon,
and let $z_{j}$ be the critical points on the boundary $\partial\Omega$ of
$\Omega,$ and let $\theta_{j}$ be the corresponding interior angles with
vertices at $z_{j},$ respectively. Then%
\begin{equation}
\sum\left(  1-\dfrac{\left(  n_{j}+2\right)  \theta_{j}}{2\pi}\right)  =2+\sum
m_{i}, \label{Teich equality}%
\end{equation}
where $n_{j}$ are the multiplicities of $z_{j}=1,$ and $m_{i}$ the
multiplicities of critical points inside $\Omega.$\bigskip
\end{lemma}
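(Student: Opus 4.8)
The plan is to read the identity (\ref{Teich equality}) as a Gauss--Bonnet formula for the singular flat metric carried by $\varpi_q$, so that the proof reduces to a few local angle computations plus one global topological count.

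First I would pass to the natural coordinate of the differential. Writing $\varpi_q=-Q\,dz^2$ with $Q=q(z)/z$, away from the critical points one sets $\zeta=\int^z\sqrt{Q(t)}\,dt$, so that $Q\,dz^2=d\zeta^2$ and the associated metric $|Q(z)|\,|dz|^2$ becomes the Euclidean metric $|d\zeta|^2$. In this flat coordinate the loci $\Re\zeta=\mathrm{const}$ and $\Im\zeta=\mathrm{const}$ are straight lines, so horizontal and vertical trajectories are geodesics; in particular every side of the $\varpi_q$-polygon $\Omega$ is a geodesic segment and contributes no geodesic curvature.

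Next I would set up the local model at a critical point of multiplicity $n$ (a regular point being $n=0$, a simple pole $n=-1$, a zero $n\geq1$). There $\sqrt{Q}\sim c\,z^{n/2}$, hence $\zeta\sim c'\,z^{(n+2)/2}$, so the coordinate change multiplies angles by $(n+2)/2$. Therefore the total cone angle of the flat metric at such a point is $(n+2)\pi$, and the curvature concentrated there equals the angle defect $2\pi-(n+2)\pi=-n\pi$. Two consequences are recorded: a boundary vertex $z_j$ whose interior angle measured in the $z$-plane is $\theta_j$ has flat interior angle $\alpha_j=\tfrac{1}{2}(n_j+2)\theta_j$, and an interior critical point of multiplicity $m_i$ carries concentrated curvature $-m_i\pi$.

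Finally I would apply the Gauss--Bonnet theorem to $\Omega$. Since $\Omega$ is bounded by trajectory arcs and is a topological disk, $\chi(\Omega)=1$ and
\[
\int_{\Omega}K\,dA+\int_{\partial\Omega}k_g\,ds+\sum_j(\pi-\alpha_j)=2\pi\chi(\Omega).
\]
The boundary term vanishes because the sides are geodesics, while $\int_{\Omega}K\,dA=\sum_i(2\pi-(m_i+2)\pi)=-\pi\sum_i m_i$ gathers the interior cone defects. Substituting $\alpha_j=\tfrac{1}{2}(n_j+2)\theta_j$ and dividing by $\pi$ gives
\[
-\sum_i m_i+\sum_j\Bigl(1-\frac{(n_j+2)\theta_j}{2\pi}\Bigr)=2,
\]
which rearranges to exactly (\ref{Teich equality}). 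The step needing the most care is the local one: justifying that trajectory sides are genuine geodesics and, above all, pinning down the branching factor $(n+2)/2$ that converts the $z$-plane angle $\theta_j$ into the flat angle $\alpha_j$; once these are secured the global identity follows at once. The only structural hypothesis to track is the disk topology giving $\chi=1$---for a $\varpi_q$-polygon of different connectivity the right-hand side would change accordingly through $\chi(\Omega)$.
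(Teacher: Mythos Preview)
Your argument is correct and is essentially the standard derivation of Teichm\"uller's formula via Gauss--Bonnet for the singular flat metric $|\varpi_q|$: the natural parameter $\zeta$ flattens the metric away from critical points, the local model $\zeta\sim c\,z^{(n+2)/2}$ gives the cone angle $(n+2)\pi$ and the angle-scaling factor $(n+2)/2$ at each vertex, and the identity drops out of Gauss--Bonnet once one notes that trajectory arcs are geodesics.

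The paper, however, does not prove this lemma at all: it is quoted directly from Strebel \cite[Theorem~14.1]{Strebel} and used as a black box (for instance in the proof of Lemma~\ref{at infinity}). So there is no ``paper's own proof'' to compare with; your Gauss--Bonnet route is exactly the argument one finds in Strebel's book and in Jenkins. Two small points worth tightening if you write this out in full: first, the definition of a $\varpi_q$-polygon given here allows vertices at regular points where a horizontal arc meets a vertical arc, and your formula handles these automatically with $n_j=0$, but the lemma as stated in the paper lists only critical boundary points---this is a harmless discrepancy in the statement rather than in your proof. Second, as you already note, the right-hand side $2+\sum m_i$ presupposes $\chi(\Omega)=1$; the paper's applications only involve simply-connected polygons, so this suffices.
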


\bigskip We will focus on the case where
\[
q\left(  z\right)  =q_{a}\left(  z\right)  =\left(  z-1\right)  \left(
z-a\right)  \left(  z-\overline{a}\right)  ,
\]
with
\[
a\in%
%TCIMACRO{\U{2102} }%
%BeginExpansion
\mathbb{C}
%EndExpansion
^{+}=\left\{  a\in%
%TCIMACRO{\U{2102} }%
%BeginExpansion
\mathbb{C}
%EndExpansion
\mid\Im\left(  a\right)  >0\right\}  .
\]
We have the following immediate observations :

\begin{itemize}
\item The finite critical points of $\varpi_{q}$ are $1,a,\overline{a}$ as
simple zeros, and $-1$ as a simple pole.

\item With the parametrization $u=1/z$, we get
\[
\varpi_{q}\left(  u\right)  =\left(  -\frac{1}{u^{6}}+\mathcal{O}\left(
\frac{1}{u^{5}}\right)  \right)  du^{2},\text{ }u\longrightarrow0,
\]
thus, infinity is an infinite critical point of $\varpi_{q},$ as a pole of
order $6.$

\item Since $\infty$ is the only infinite critical point of $\varpi_{q},$ any
critical trajectory which is not finite diverges to $\infty$ following one of
the 4 directions :
\[
D_{k}=\left\{  z\in%
%TCIMACRO{\U{2102} }%
%BeginExpansion
\mathbb{C}
%EndExpansion
\mid\arg\left(  z\right)  =\left(  2k+1\right)  \frac{\pi}{4}\right\}
,k=0,1,2,3.
\]
The same thing happen to the orthogonal trajectories at $\infty$, but the
critical directions are :%
\[
D_{k}^{\perp}=\left\{  z\in%
%TCIMACRO{\U{2102} }%
%BeginExpansion
\mathbb{C}
%EndExpansion
:\arg\left(  z\right)  =\frac{k\pi}{2}\right\}  ;k=0,1,2,3.
\]
Observe that if two trajectories diverge to $\infty$ in a same direction
$D_{k},$ then there exists a neighborhood $\mathcal{V}$ of $\infty$, such that
any orthogonal trajectory traversing $D_{k}$ in $\mathcal{V},$ must traverse
these two trajectories.

\item \bigskip Since the quadratic differential $\varpi_{q}$ has two poles,
Jenkins Three-pole Theorem (see \cite[Theorem 15.2]{Strebel}) asserts that the
situation of the so-called recurrent trajectory (whose closure might be dense
in some domain in $%
%TCIMACRO{\U{2102} }%
%BeginExpansion
\mathbb{C}
%EndExpansion
$) cannot happen.
\end{itemize}

\bigskip

\begin{lemma}
\label{at infinity}Two critical trajectories of $\varpi_{q}$ emanating from
the same zero cannot diverge to $\infty$ in the same direction.
\end{lemma}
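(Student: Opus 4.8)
The plan is to argue by contradiction using the Teichm\"uller lemma (Lemma \ref{teich lemma}) together with the orthogonal-trajectory observation recorded just above. Suppose two trajectories $\gamma_1,\gamma_2$ emanate from a single zero $z_0\in\{1,a,\overline a\}$ and both diverge to $\infty$ in the same direction $D_k$. First I would observe that $\gamma_1\cup\gamma_2\cup\{z_0,\infty\}$ is a Jordan curve on $\overline{\mathbb{C}}$ bounding a domain $\Omega$ whose boundary consists only of horizontal trajectory arcs; hence $\Omega$ is a $\varpi_q$-polygon. Because a trajectory reaching another finite critical point would terminate there (and thus be finite rather than divergent), the only critical points on $\partial\Omega$ are $z_0$, a simple zero, and $\infty$, a pole of order $6$; the remaining zeros and the simple pole may a priori lie in the interior.

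The key geometric input is the computation of the two interior angles. At $z_0$ three trajectories emanate under equal angle $2\pi/3$, so the interior angle of $\Omega$ at $z_0$ is $\theta_0\in\{2\pi/3,4\pi/3\}$. At $\infty$ the two boundary trajectories approach along the \emph{same} critical direction $D_k$; passing to the natural parameter $\int\sqrt{Q}\,dt$ they become asymptotically parallel, so the strip $\Omega$ closes up and the interior angle at $\infty$ is $\theta_\infty=0$. This is precisely the phenomenon behind the orthogonal-trajectory observation: near $\infty$ the two branches bound an arbitrarily thin band traversed by every orthogonal trajectory crossing $D_k$.

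Feeding these data into the Teichm\"uller equality (\ref{Teich equality}), with multiplicities $n_{z_0}=1$ and $n_\infty=-6$, gives
\[
\left(1-\frac{3\theta_0}{2\pi}\right)+\left(1-\frac{(-6+2)\cdot 0}{2\pi}\right)=2+\sum_i m_i,
\]
that is $\sum_i m_i=-\dfrac{3\theta_0}{2\pi}\in\{-1,-2\}$. Since the only finite pole of $\varpi_q$ is the simple pole (contributing $-1$) while every interior zero contributes $+1$, the interior sum satisfies $\sum_i m_i\ge -1$. This immediately excludes the case $\theta_0=4\pi/3$, and in the case $\theta_0=2\pi/3$ it forces the simple pole to be the \emph{unique} interior critical point of $\Omega$.

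It remains to eliminate this last configuration, and this is where I expect the main difficulty. Here I would use that a simple pole emits exactly one trajectory $\beta$; trapped inside $\Omega$, unable to cross $\gamma_1$ or $\gamma_2$ or to meet another finite critical point, and forbidden from being recurrent by Jenkins' three-pole theorem, $\beta$ must also diverge to $\infty$ through the strip, hence in the direction $D_k$. I would then either reapply Lemma \ref{teich lemma} to the slit polygon $\Omega\setminus\beta$, or invoke the orthogonal-trajectory observation to produce a vertical trajectory trapped in $\Omega$ with no admissible endpoint, a contradiction. The delicate points---and the part most likely to require care---are the rigorous justification that $\theta_\infty=0$ and the correct bookkeeping of the angle at the order-$6$ pole once the slit $\beta$ is adjoined, since the contribution of a high-order pole to (\ref{Teich equality}) is governed by the number of critical directions the polygon spans there rather than by a naive Euclidean angle.
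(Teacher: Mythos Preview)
Your approach is exactly the paper's: form the $\varpi_q$-polygon $\Omega$ bounded by $\gamma_1,\gamma_2$ and apply Teichm\"uller's lemma. The paper's write-up is terser (and swaps ``left'' and ``right'' in the inequality), but the computation is the same and isolates the same residual configuration: $\theta_0=2\pi/3$ with only the simple pole in the interior of $\Omega$.

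Where you diverge is in eliminating this residual case, and here you are working harder than necessary and in a slightly wrong direction. You propose to follow the pole's trajectory $\beta$ out to $\infty$ along $D_k$ and then slit $\Omega$ or invoke orthogonal trajectories. But the unique horizontal trajectory emanating from the simple pole $z=0$ is already known: it is the segment $[0,1]$, which is always a short trajectory of $\varpi_q$. So $\beta$ does \emph{not} diverge to $\infty$; it terminates at the zero $z=1$. Since $\beta$ cannot cross $\gamma_1\cup\gamma_2$, this forces $z=1\in\overline\Omega$. If $z_0\in\{a,\overline a\}$ then $1\notin\partial\Omega$, so $1$ is an interior zero contributing $+1$ to $\sum m_i$, contradicting $\sum m_i=-1$. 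If $z_0=1$ then $[0,1]$ is the third trajectory ray at $z_0$ and lies in the complementary $4\pi/3$ sector, so $0\notin\Omega$ and again $\sum m_i\neq-1$. This is precisely the content of the paper's parenthetical remark that the polygon ``cannot contain the pole $z=0$, otherwise it contains $z=1$'', and it finishes the proof without any further appeal to Teichm\"uller or to orthogonal trajectories.
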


\begin{lemma}
\label{residue}For any Jordan arc $\gamma$ connecting $a$ and $\overline{a}$
in $%
%TCIMACRO{\U{2102} }%
%BeginExpansion
\mathbb{C}
%EndExpansion
\setminus\left[  0,1\right]  $ we have%
\[
\Re\int_{\gamma}\sqrt{\frac{\left(  z-1\right)  \left(  z-a\right)  \left(
z-\overline{a}\right)  }{z}}dz=0,
\]
and then, condition (\ref{cond necess}) is fulfilled.
\end{lemma}

We consider the set
\[
\Sigma=\left\{  z\in%
%TCIMACRO{\U{2102} }%
%BeginExpansion
\mathbb{C}
%EndExpansion
:\Re\int_{0}^{z}\sqrt{\frac{\left(  t-1\right)  \left(  t-z\right)  \left(
t-\overline{z}\right)  }{t}}dt=0\right\}
\]

\begin{lemma}
\label{curve}The set $\Sigma$ is symmetric with respect to the real axis, and
it is formed by $3$ Jordan arcs :

\begin{itemize}
\item the segment $\left[  0,1\right]  ;$

\item two curves $\Sigma^{\pm}$ emerging from $z=1,$ and diverging
respectively to infinity in $%
%TCIMACRO{\U{2102} }%
%BeginExpansion
\mathbb{C}
%EndExpansion
_{\pm}.$
\end{itemize}
\end{lemma}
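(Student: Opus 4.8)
The plan is to study the real-valued function
$$F(z) = \Re \int_0^z \sqrt{\frac{(t-1)(t-z)(t-\overline{z})}{t}}\,dt,\qquad z\in\mathbb{C},$$
so that $\Sigma=F^{-1}(0)$, and to determine its zero set. First I would check that $F$ is well defined and continuous. The integrand has branch points only at $0,1,z,\overline z$, so the integral is path-independent modulo periods; it therefore suffices that every relevant period have vanishing real part. For a loop around $\{a,\overline a\}$ this is exactly Lemma~\ref{residue}, while for a loop around $\{0,1\}$ it follows because, for real $t\in(0,1)$, one has $(t-z)(t-\overline z)=(t-\Re z)^2+(\Im z)^2>0$ and $(t-1)/t<0$, so the radicand is negative and $\sqrt{\cdot}$ is purely imaginary along $(0,1)$. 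Hence $\Re F$ is unambiguous and $\Sigma$ is closed. Symmetry about $\mathbb{R}$ then follows by conjugating the contour: since $(t-1)(t-z)(t-\overline z)/t$ has real coefficients in $t$, replacing the path from $0$ to $z$ by its mirror image gives $\int_0^{\overline z}\sqrt{\cdots}\,dt=\overline{\int_0^z\sqrt{\cdots}\,dt}$, whence $F(\overline z)=F(z)$.

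The same sign computation yields two structural reductions. Since the radicand is negative on $(0,1)$ for every $z\in\mathbb{C}^+$, the segment $[0,1]$ contributes nothing to the real part, so for $z\in\mathbb{C}^+$ I may write $F(z)=\Re\int_1^z\sqrt{(t-1)(t-z)(t-\overline z)/t}\,dt$. Taking $z=x$ real, the radicand is negative on $(0,1)$ (purely imaginary contribution) but positive on $(1,x)$ for $x>1$ and on $(x,0)$ for $x<0$ (real, sign-definite contribution); hence $F(x)=0$ precisely on $[0,1]$, giving $\Sigma\cap\mathbb{R}=[0,1]$, the first of the three Jordan arcs.

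It remains to analyze $\Sigma$ in the open upper half-plane, which is the heart of the matter; by symmetry the lower half-plane is its mirror image, and the two must meet $[0,1]$ at $z=1$. I would attack this in three movements. For the local structure at $z=1$, writing $t=1+s$, $z=1+\zeta$ and rescaling $s=\zeta\sigma$, the reduced integral satisfies $F(1+\zeta)\sim\Re\bigl[\zeta^{5/2}\int_0^1\sqrt{\sigma(\sigma-1)(\sigma-\overline\zeta/\zeta)}\,d\sigma\bigr]$, which shows that only finitely many arcs of $\Sigma$ emanate from $z=1$; matching with the incoming segment $[0,1]$ (the direction $\arg\zeta=\pi$) and using the symmetry isolates exactly one arc leaving into $\mathbb{C}^+$. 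For the behavior at $\infty$, the substitution $t=z\tau$ gives $F(z)\sim|z|^2\,\Re\bigl[e^{2i\arg z}\int_0^1\sqrt{(\tau-1)(\tau-e^{-2i\arg z})}\,d\tau\bigr]=:|z|^2 g(\arg z)$, so the asymptotic directions of $\Sigma$ in $\mathbb{C}^+$ are the zeros of $g$ on $(0,\pi)$; the task is to prove $g$ has a single sign change there, so that exactly one arc diverges to infinity. Finally, for the gluing, away from $z=1$ the implicit function theorem makes $\Sigma\cap\mathbb{C}^+$ a smooth one-dimensional manifold wherever $\nabla F\neq 0$, and combining the endpoint data at $1$ and $\infty$ with the symmetry should yield a single unbroken arc $\Sigma^+$, reflected to $\Sigma^-$.

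The main obstacle is the global topology in the third movement: ruling out closed loops and additional components of $\{F=0\}$ in $\mathbb{C}^+$ and proving connectedness. Because $F$ is the real part of an integral depending on $\overline z$ as well as $z$, it is not harmonic, so one cannot simply invoke the maximum principle. I expect to control this by showing $F$ has no interior critical points in $\mathbb{C}^+$, so that $\{F=0\}$ is free of singularities there, and that $\{F>0\}$ and $\{F<0\}$ are each connected, using the monotonicity of $g$ from the asymptotic analysis together with the absence of recurrent trajectories (Jenkins) and the Teichm\"uller count of Lemma~\ref{teich lemma} applied to the differentials $\varpi_{q_a}$ as $a$ ranges over $\mathbb{C}^+$. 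Establishing the single sign change of $g$ and the absence of interior critical points of $F$ are the two delicate points on which the whole argument rests.
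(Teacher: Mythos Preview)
Your preliminary steps---well-definedness via real periods, the symmetry $F(\overline z)=F(z)$, and $\Sigma\cap\mathbb R=[0,1]$---agree with the paper. Where you diverge is in the treatment of $\Sigma\cap\mathbb C^+$: you propose local expansions at $1$ and $\infty$ followed by a global gluing, and you rightly flag the gluing (ruling out loops and extra components) as the hard part that you have not actually carried out.

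The paper bypasses this obstacle with an explicit monotonicity argument. Writing $z=x+iy$ with $y>0$, it integrates along the L-shaped path $0\to x\to x+iy$ and splits $F(z)=F_1(x,y)+G(x,y)$, where $F_1$ is the real part over $[0,x]$ and $G$ over $[x,x+iy]$; the substitution $u=x+ity$ turns $G$ into a real integral. First it shows that for $x\le 1$, $y>0$ one has $F_1=0$ (your purely-imaginary observation) and $G<0$, so $\Sigma^+\subset\{x>1\}$. Then, for $x>1$, it computes $\partial F_1/\partial x$ and $\partial G/\partial x$ explicitly and checks both are nonnegative with $\partial F_1/\partial x>0$. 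Hence $\partial F/\partial x>0$ throughout $\{x>1,\,y>0\}$, and the implicit function theorem yields $\Sigma^+$ as the graph of a single smooth function $x=x(y)$---automatically excluding closed loops and additional components. This strict $x$-monotonicity is exactly the missing idea in your plan: it is stronger than ``no interior critical points'' and it is obtained by a direct computation, not via Jenkins or Teichm\"uller, which concern the trajectory structure of a \emph{fixed} $\varpi_{q_a}$ rather than the level set of a function of the parameter $a$. Your local and asymptotic expansions at $z=1$ and $z=\infty$ are correct in spirit but belong to the next lemma in the paper, where the directions $\pi/3$ and $\pi/2$ are derived by essentially your rescalings.
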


\begin{figure}[th]
\centering\includegraphics[height=2in,width=3in]{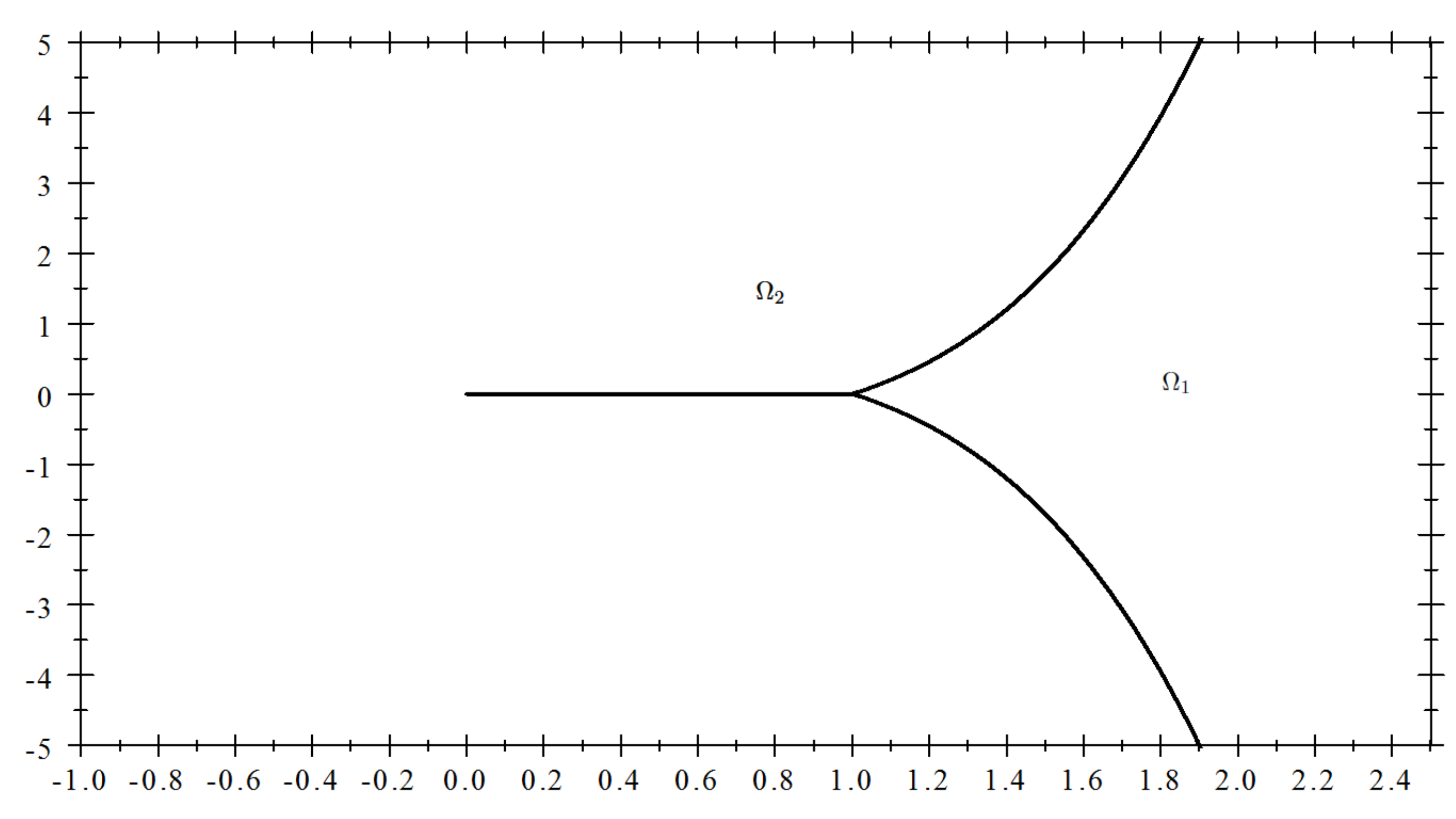}\caption{Approximate
plot of the curve $\Sigma$}%
\label{FIG2}%
\end{figure}

We give here the behavior of $\Sigma$ at $z=1$ and at the infinity:

\begin{lemma}
\label{asympt of the curve}The following results hold :
\begin{align*}
\lim\limits_{z\rightarrow\infty,z\in\Sigma\cap%
%TCIMACRO{\U{2102} }%
%BeginExpansion
\mathbb{C}
%EndExpansion
^{+}}\arg\left(  z\right)   &  =\frac{\pi}{2},\\
\lim\limits_{z\rightarrow1,z\in\Sigma\cap%
%TCIMACRO{\U{2102} }%
%BeginExpansion
\mathbb{C}
%EndExpansion
^{+}}\arg\left(  z\right)   &  =\frac{\pi}{3}.
\end{align*}

\end{lemma}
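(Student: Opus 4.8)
The plan is to read both limits off the global and local behaviour of the analytic function
\[
F(z)=\int_{0}^{z}\sqrt{\frac{(t-1)(t-z)(t-\overline{z})}{t}}\,dt,
\]
whose vanishing-real-part locus is exactly $\Sigma$. The structural fact that organizes the whole computation is that for real $t\in(0,1)$ one has $(t-z)(t-\overline{z})=|t-z|^{2}>0$ while $(t-1)/t<0$; hence the radicand is negative on $(0,1)$, the integrand is purely imaginary there, and $\Re\int_{0}^{1}=0$ identically in $z$. Consequently $\Re F$ is produced only by the part of the integration path that leaves $[0,1]$, and this is what makes each of the two regimes tractable.

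For the behaviour at infinity I would substitute $t=zs$, $s\in[0,1]$, and let $|z|\to\infty$ with $\phi=\arg z$ fixed. Writing $\overline{z}/z=e^{-2i\phi}$, the radicand becomes $z^{2}(s-1)(s-e^{-2i\phi})\bigl(1+o(1)\bigr)$, so that
\[
F(z)=z^{2}G(\phi)\bigl(1+o(1)\bigr),\qquad G(\phi)=\int_{0}^{1}\sqrt{(s-1)(s-e^{-2i\phi})}\,ds,
\]
and the defining relation $\Re F=0$ becomes $\Re\bigl(e^{2i\phi}G(\phi)\bigr)=0$ in the limit. Evaluating at $\phi=\pi/2$ gives $e^{-2i\phi}=-1$ and $G(\pi/2)=\int_{0}^{1}\sqrt{s^{2}-1}\,ds=\tfrac{i\pi}{4}$, whence $e^{2i\phi}G(\phi)=-\tfrac{i\pi}{4}$ is purely imaginary and the condition holds. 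Since $\Sigma$ is symmetric about $\mathbb{R}$ by Lemma~\ref{curve}, it then remains to show that $\phi=\pi/2$ is the only solution of $2\phi+\arg G(\phi)\equiv\pi/2\pmod{\pi}$ met by the upper branch; I would do this by checking that $\phi\mapsto 2\phi+\arg G(\phi)$ crosses $\pi/2$ monotonically, using that $G$ is the elementary integral of the square root of a quadratic.

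For the behaviour at $z=1$ I would put $w=z-1\to0$ and $\psi=\arg w$. By the reduction above, $\Re F(z)$ equals the real part of the integral of the same integrand from $1$ to $z$ taken along the short segment joining them. Near $t=1$ the three zeros $1,z,\overline{z}$ coalesce and the factor $1/t\to1$; after $u=t-1$ and $u=wv$ the integral rescales homogeneously as
\[
\int_{1}^{z}\sqrt{\frac{(t-1)(t-z)(t-\overline{z})}{t}}\,dt=|w|^{5/2}e^{5i\psi/2}\,\widetilde H(\psi)\bigl(1+o(1)\bigr),
\]
where $\widetilde H(\psi)=\int_{0}^{1}\sqrt{v(v-1)(v-e^{-2i\psi})}\,dv$, so the tangent direction of $\Sigma^{+}$ at $z=1$ is the relevant root $\psi$ of $\Re\bigl(e^{5i\psi/2}\widetilde H(\psi)\bigr)=0$, and carrying out this evaluation yields the limiting tangent direction in the statement. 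The hardest step is precisely this evaluation: in contrast to the quadratic integral $G$ at infinity, $\widetilde H$ is of elliptic type, so one must both fix the branch of the square root by continuity from its purely imaginary determination on $(0,1)$ and verify that the short trajectory joining the two colliding zeros lies in the homotopy class for which the straight segment computes $\widetilde H$; only then is the transcendental equation for $\psi$ pinned down and shown to single out the stated value. Lemma~\ref{curve} guarantees that $\Sigma^{+}$ is a single arc from $z=1$ to $\infty$, so these two one-sided tangent computations describe exactly its two ends.
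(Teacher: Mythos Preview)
Your treatment of the limit at infinity is the same set-up as the paper's: the substitution $t=zs$ and the limiting condition $\Re\bigl(e^{2i\phi}G(\phi)\bigr)=0$ with $G(\phi)=\int_{0}^{1}\sqrt{(s-1)(s-e^{-2i\phi})}\,ds$. Where you check that $\phi=\pi/2$ is a solution and then propose a monotonicity argument for uniqueness, the paper does something slicker: the affine change $s=\alpha u+\beta$ with $\beta=\tfrac12(1+e^{-2i\phi})$, $\alpha=\tfrac{i}{2}(1-e^{-2i\phi})$ sends $(s-1)(s-e^{-2i\phi})$ to $\alpha^{2}(u^{2}+1)$, and one computes $e^{2i\phi}\alpha^{2}=\sin^{2}\phi\in\mathbb{R}$. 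The limiting integral becomes $\sin^{2}\phi\int_{\cot\phi}^{\pm i}\sqrt{u^{2}+1}\,du$, whose real part equals $\sin^{2}\phi\int_{\cot\phi}^{0}\sqrt{u^{2}+1}\,du\neq0$ for every $\phi\in(0,\pi/2)$. This gives uniqueness in one line and avoids the monotonicity discussion you leave open.

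For the limit at $z=1$ there is a genuine gap. Your rescaling $t=1+wv$ is correct and leads to $\Re\bigl(e^{5i\psi/2}\widetilde H(\psi)\bigr)=0$ with $\widetilde H(\psi)=\int_{0}^{1}\sqrt{v(v-1)(v-e^{-2i\psi})}\,dv$, but you then simply assert that ``carrying out this evaluation yields the limiting tangent direction in the statement''. Since $\widetilde H$ is an elliptic-type integral depending nontrivially on $\psi$, nothing you have written singles out any particular value; you still owe either an explicit evaluation or a structural argument. The paper sidesteps this entirely by a cruder but much more direct expansion: near $t=1$ it writes $\sqrt{(t-1)(t-z)(t-\overline z)/t}=|z-1|\sqrt{t-1}+o\bigl((t-1)^{1/2}\bigr)$, integrates the leading term from $1$ to $z$ to obtain $\tfrac{2}{3}|z-1|(z-1)^{3/2}$, and reads off $\arg(z-1)^{3/2}\equiv\pi/2\pmod\pi$, i.e.\ $\arg(z-1)=\pi/3$. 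If you prefer your rescaled formulation, you must actually solve for $\psi$ rather than declare the answer.
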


From Lemma \ref{curve}, $\Sigma$ splits $%
%TCIMACRO{\U{2102} }%
%BeginExpansion
\mathbb{C}
%EndExpansion
$ into 2 connected domains :

\begin{itemize}
\item $\Omega_{1}$ limited by $\Sigma^{\pm}$ and containing $z=2;$

\item $\Omega_{2}=%
%TCIMACRO{\U{2102} }%
%BeginExpansion
\mathbb{C}
%EndExpansion
\setminus$ $\left(  \Omega_{1}\cup\Sigma^{\pm}\cup\left[  0,1\right]  \right)
.$ See Figure \ref{FIG2}.
\end{itemize}

\begin{proposition}
\label{main}For any complex number $a,$ the quadratic differential $\varpi
_{q}$ has :

\begin{itemize}
\item two short trajectories if $a\in\Omega_{1}$: the segment $\left[
0,1\right]  $ and another one connecting $a$ and $\overline{a}$ in $\Omega
_{1};$see Figure \ref{FIG3};

\item three short trajectories if $a\in\Sigma^{\pm}$ : the segment $\left[
0,1\right]  $ and two others connecting $z=1$ with $a$ and $\overline{a};$ see
Figure \ref{FIG4};

\item two short trajectories if $a\in\Omega_{2}$ : the segment $\left[
0,1\right]  $ and another one connecting $a$ and $\overline{a}$ in $\Omega
_{2};$see Figure \ref{FIG5}.
\end{itemize}
\end{proposition}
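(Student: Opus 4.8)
The plan is to reduce the whole statement to a single trichotomy for a symmetric short trajectory joining $a$ and $\overline{a}$, governed by where it can meet the real axis, and then to match each alternative with the position of $a$ relative to $\Sigma$.

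First I would record two facts valid for every $a$. Since $q_a$ has real coefficients, $\varpi_q$ is invariant under $z\mapsto\overline{z}$, so its critical graph is symmetric about $\mathbb{R}$. A sign computation gives $-q_a(x)/x>0$ on $(0,1)$ and $-q_a(x)/x<0$ on $(-\infty,0)\cup(1,+\infty)$; hence $(0,1)$ carries a horizontal trajectory while the two real half-lines carry vertical ones. As the simple pole at $0$ emits exactly one trajectory, that trajectory must be $[0,1]$, which is therefore always short; this also fixes the three rays leaving the zero $z=1$ as $[0,1]$ together with two rays issuing at angles $\pm\pi/3$ into the upper and lower half-planes, matching Lemma \ref{asympt of the curve}.

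Next I would analyse an arbitrary short trajectory $\gamma$ joining $a$ and $\overline{a}$. Running from the upper to the lower half-plane, $\gamma$ must meet $\mathbb{R}$; it cannot cross transversally at a regular point, since then $\overline{\gamma}$ would be a distinct horizontal trajectory through that point. Hence $\gamma$ either crosses $\mathbb{R}$ orthogonally at a regular point --- possible only where $\mathbb{R}$ is vertical, that is on $(-\infty,0)$ or $(1,+\infty)$ --- or it passes through the real critical point $z=1$. This yields three mutually exclusive configurations: $\gamma$ crossing $(1,+\infty)$ (an arc through $\Omega_1$, which contains $z=2$); $\gamma$ crossing $(-\infty,0)$ (an arc through $\Omega_2$); or $\gamma$ degenerating through $z=1$ into the two short trajectories $[1,a]$ and $[1,\overline{a}]$. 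Since the only finite critical points are $0,1,a,\overline{a}$ and $0$ is consumed by $[0,1]$, no other short trajectory can occur, so the three configurations give exactly the counts $2,2,3$ asserted.

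It remains to establish that such a $\gamma$ exists and to attach each configuration to $\Omega_1$, $\Sigma^{\pm}$, $\Omega_2$. By Lemma \ref{residue} the necessary condition $\Re\int_\gamma\sqrt{q_a(t)/t}\,dt=0$ holds for every arc joining $a,\overline{a}$ in $\mathbb{C}\setminus[0,1]$. I would upgrade it to existence by tracking the three rays issuing from $a$ and ruling out the scenario in which all rays from $a,\overline{a}$ together with the two non-real rays from $z=1$ escape to $\infty$: Lemma \ref{at infinity}, the absence of recurrent trajectories, and the Teichm\"{u}ller identity of Lemma \ref{teich lemma} applied to the $\varpi_q$-polygons bounded by $[0,1]$ and the candidate arc together force a finite reconnection, which by the crossing analysis must be the pair $a,\overline{a}$ or the broken path through $z=1$. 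For the matching, note that the defining relation of $\Sigma$ reads $\Re\int_0^a\sqrt{q_a(t)/t}\,dt=0$, and since $\Re\int_0^1\sqrt{q_a(t)/t}\,dt=0$ this is equivalent to $\Re\int_1^a\sqrt{q_a(t)/t}\,dt=0$, the necessary condition for a short trajectory from $z=1$ to $a$; hence $a\in\Sigma^{\pm}$ is precisely the degenerate case. For $a$ off $\Sigma$, a continuity argument over each connected domain $\Omega_1$, $\Omega_2$ --- anchored by one explicitly verifiable position in each and by the behaviour of $\Sigma$ at $z=1$ and at $\infty$ from Lemma \ref{asympt of the curve} --- keeps the crossing point in $(1,+\infty)$ throughout $\Omega_1$ and in $(-\infty,0)$ throughout $\Omega_2$. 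The main obstacle is exactly this upgrade from the vanishing-real-part condition to an actual trajectory, since that condition is not sufficient in general (the counterexample in \cite{F.Thabet}); the genuine work lies in the Teichm\"{u}ller and counting exclusion of escaping configurations, carried out uniformly as $a$ varies, and in preventing the continuity argument from jumping across $\Sigma$.
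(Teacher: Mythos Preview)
Your plan is essentially the paper's: exploit the real symmetry of $\varpi_q$, observe that $[0,1]$ is always short, use Lemma~\ref{at infinity} to force at least one of the three rays from $a$ down to $\mathbb{R}\setminus[0,1]$ (hence, by your crossing analysis, to $\overline{a}$), and identify $\Sigma$ with the locus where the necessary condition $\Re\int_1^a\sqrt{q_a(t)/t}\,dt=0$ for a $1$--$a$ short trajectory holds. The paper's execution of the $a\notin\Sigma$ existence step is in fact more direct than your ``ruling out the all-escaping scenario via Teichm\"uller'': it simply counts that there are only two asymptotic directions $D_0,D_1$ in $\mathbb{C}^+$, so by Lemma~\ref{at infinity} at most two of the three rays from $a$ escape there, and the third must meet $\mathbb{R}\setminus[0,1]$.

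Where you are vague and the paper is concrete is the $a\in\Sigma$ case. You propose ``Teichm\"uller and counting exclusion'', but the paper instead derives a direct contradiction with $a\in\Sigma$: assuming no short trajectory from $1$ to $a$, the ray $\gamma_1$ from $1$ into $\mathbb{C}^+$ and some ray $\gamma_a$ from $a$ diverge to $\infty$ along a common direction $D_k$; an orthogonal trajectory $\sigma$ near $\infty$ then cuts both at points $b,c$, and computing $\Re\int_1^a$ along the composite path $\gamma_1\cup\sigma\cup\gamma_a$ gives $\Re\int_b^c\sqrt{q_a(t)/t}\,dt\neq0$ (the horizontal pieces contribute zero, the orthogonal piece does not), contradicting the definition of $\Sigma$. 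This orthogonal-trajectory trick is precisely the mechanism you were looking for when you flagged the upgrade from the vanishing-real-part condition to an actual trajectory as the main obstacle.

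Finally, your continuity argument pinning the crossing point to $(1,+\infty)$ for $a\in\Omega_1$ and to $(-\infty,0)$ for $a\in\Omega_2$ is an addition: the paper's proof only shows that the $a$--$\overline{a}$ short trajectory meets $\mathbb{R}\setminus[0,1]$ and does not argue which component, so on that point your outline goes further than the paper's own proof.
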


\begin{figure}[tbh]
\begin{minipage}[b]{0.48\linewidth}
\centering\includegraphics[scale=0.5]{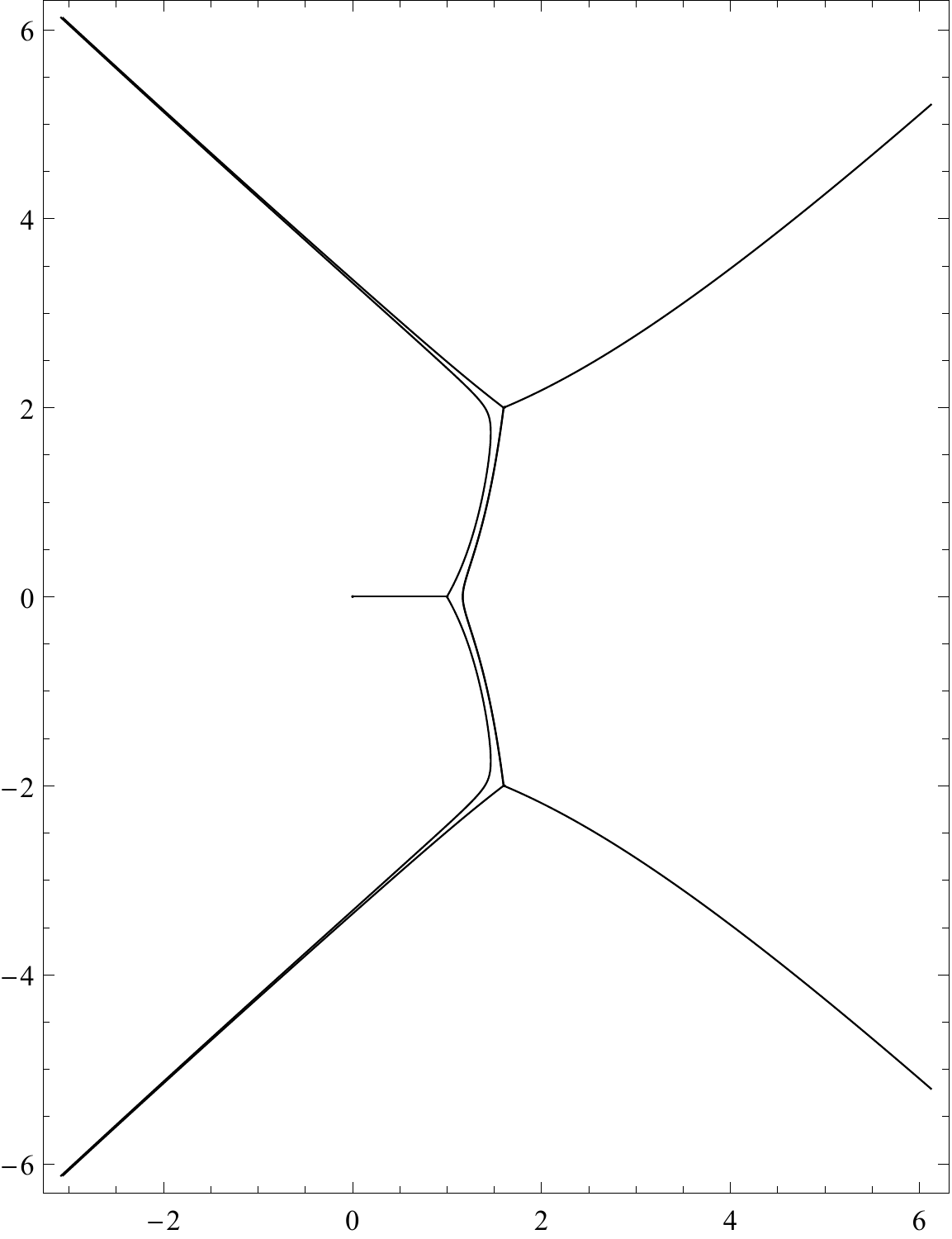}
\end{minipage}\hfill
\begin{minipage}[b]{0.48\linewidth} \includegraphics[scale=0.5]{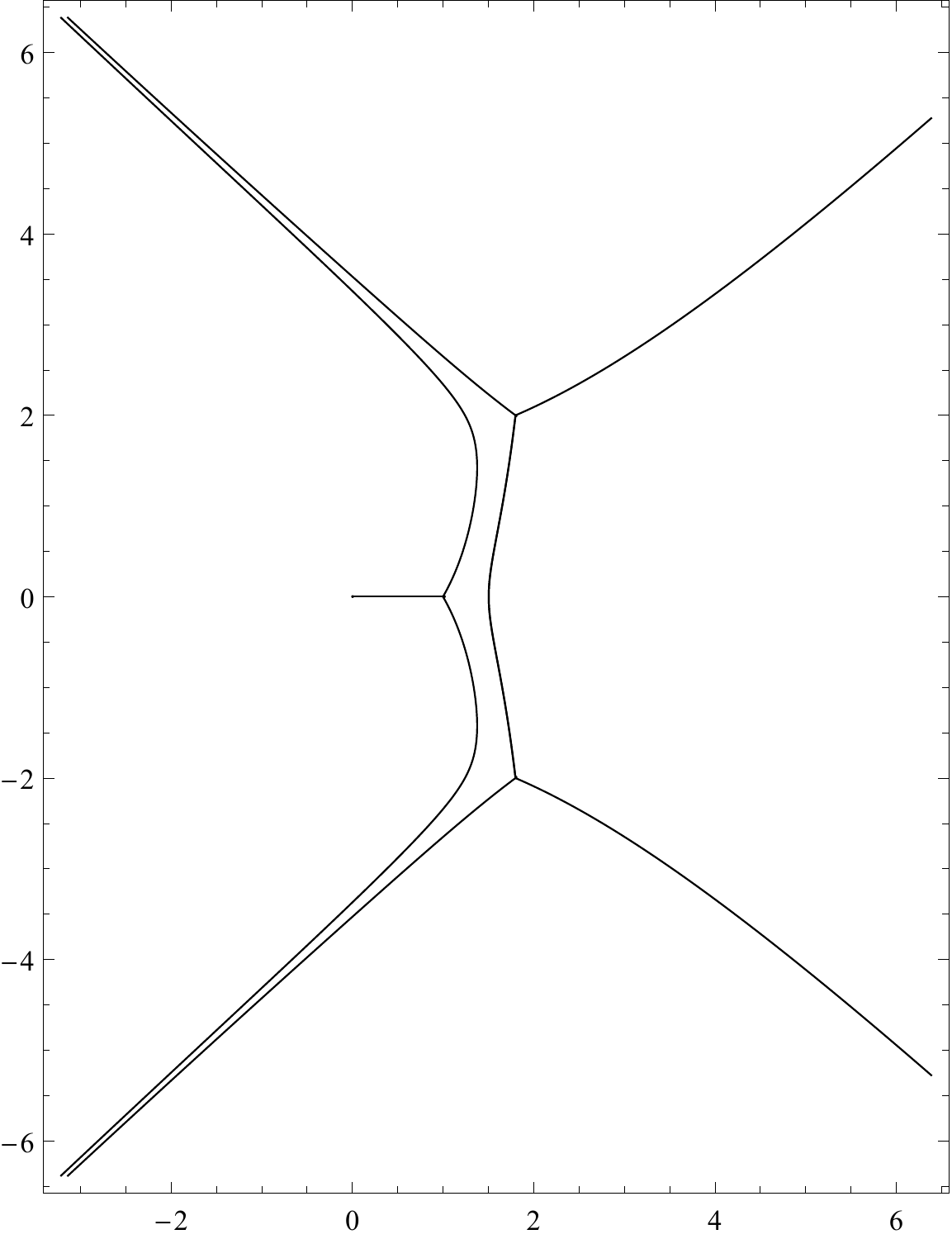}
\end{minipage}
\caption{Critical graphs when $a\in\Omega_{1},$ here $a=1.6+2i$ (left) and
$a=1.8+2i$ (right).}%
\label{FIG3}%
\end{figure}

\begin{figure}[th]
\centering\includegraphics[height=2in,width=3in]{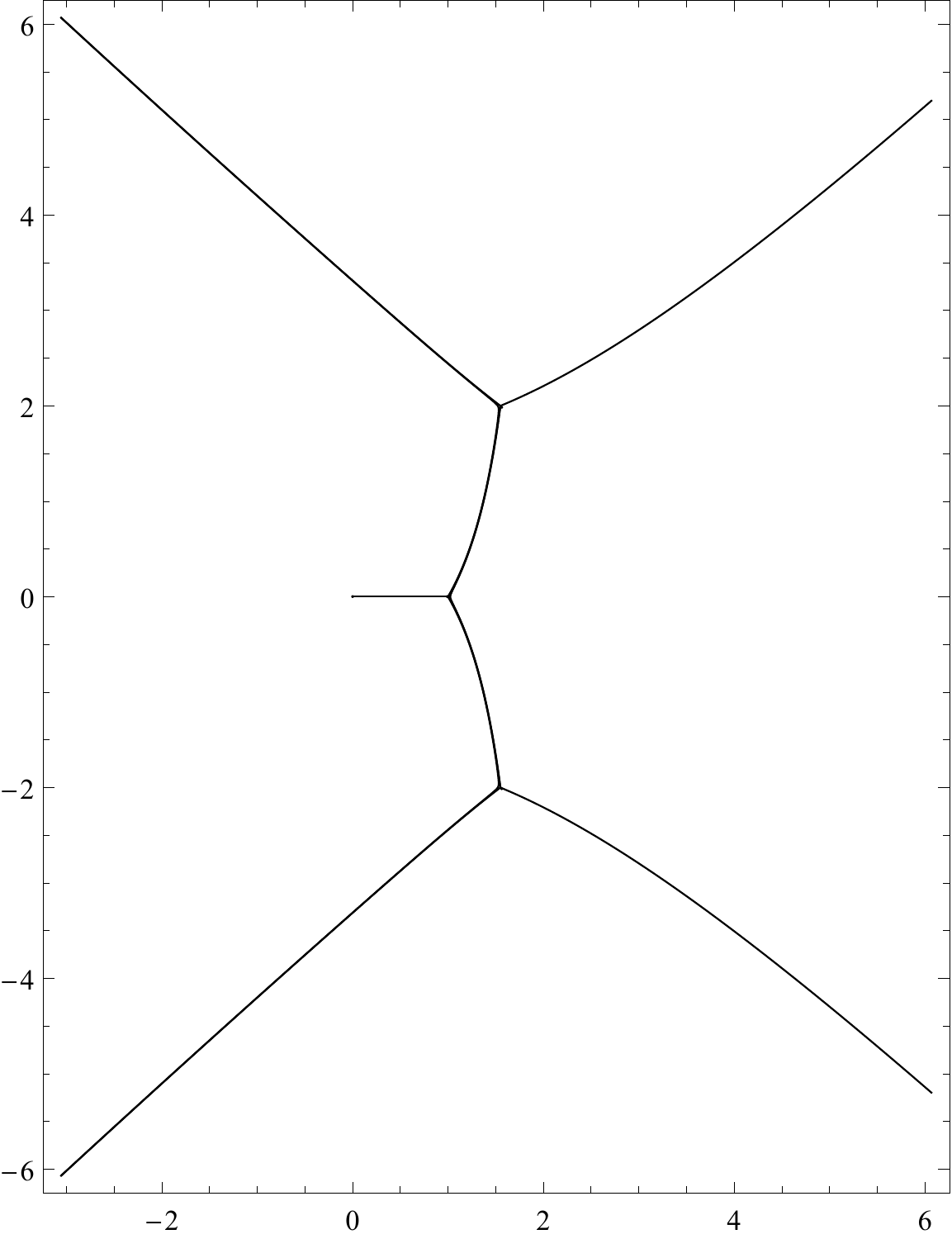}\caption{Critical graph
when $a$ $\in\Sigma,$ here $a=1.55+2i.$}%
\label{FIG4}%
\end{figure}

\begin{figure}[tbh]
\begin{minipage}[b]{0.48\linewidth}
\centering\includegraphics[scale=0.5]{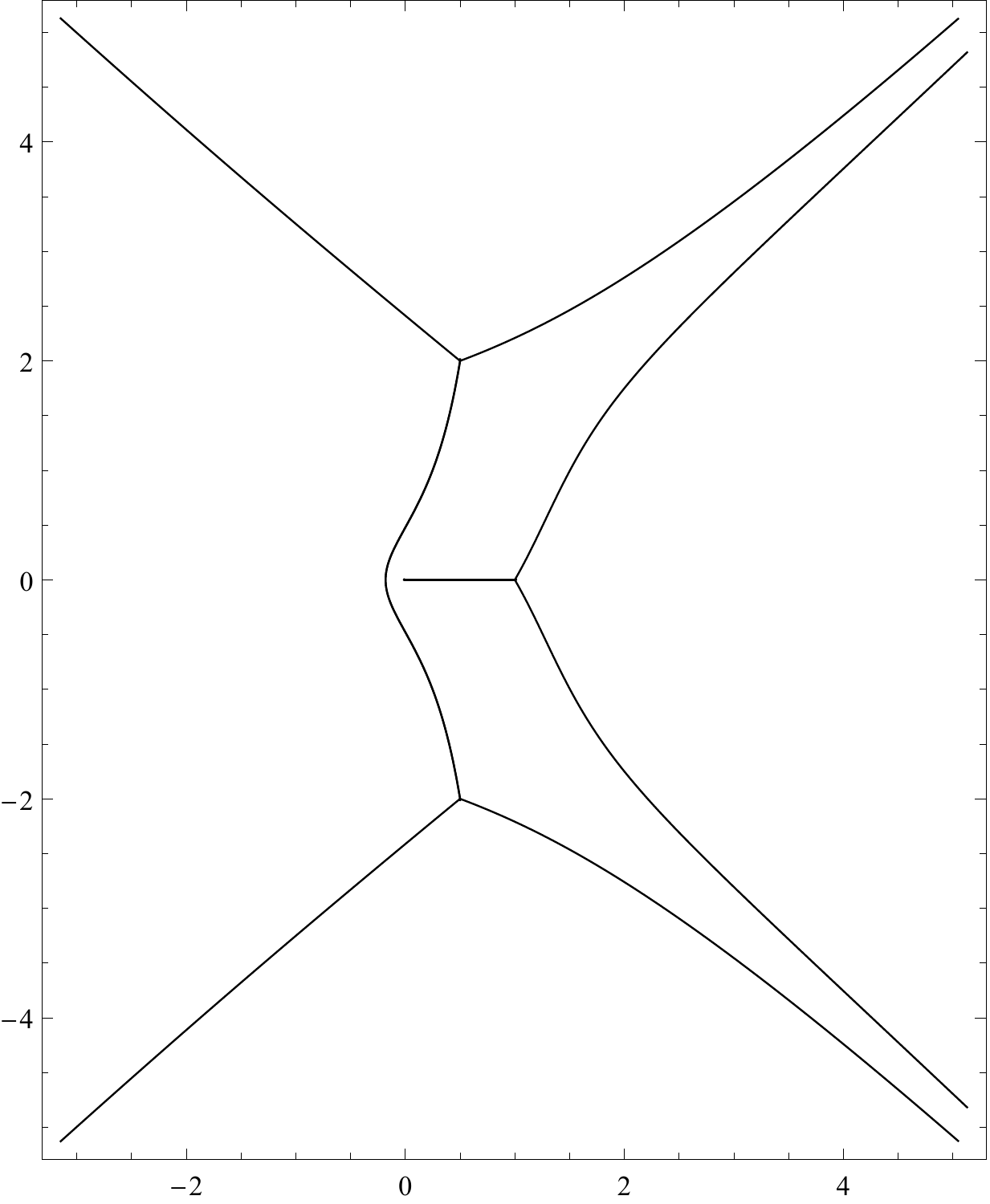}
\end{minipage}\hfill
\begin{minipage}[b]{0.48\linewidth} \includegraphics[scale=0.5]{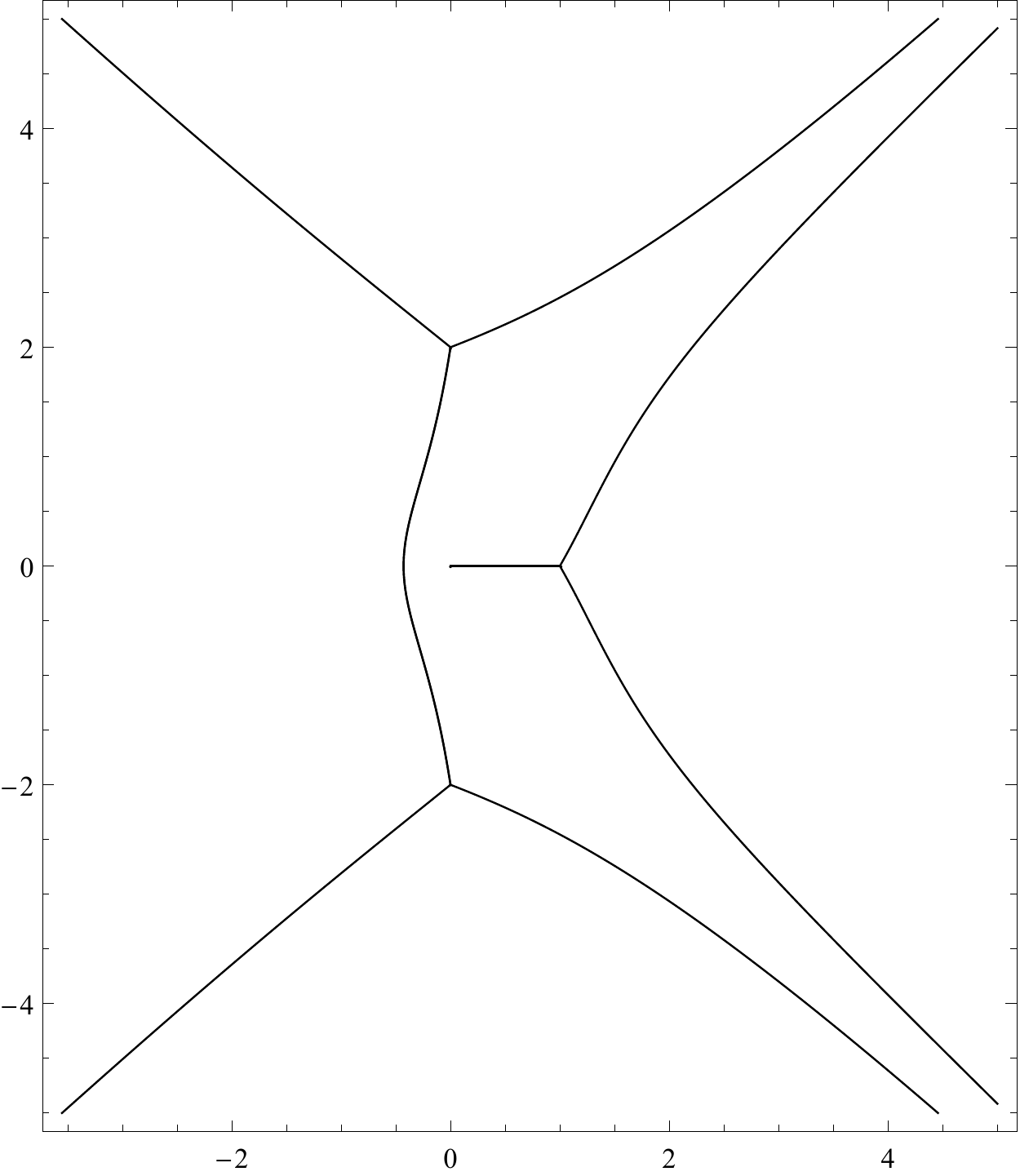}
\end{minipage}
\caption{Critical graphs when $a\in\Omega_{2},$ here $a=0.5+2i$ (left) and
$a=2i$ (right).}%
\label{FIG5}%
\end{figure}

\begin{remark}
The case when $q=\prod_{i=1}^{3}\left(  z-a_{i}\right)  \in%
%TCIMACRO{\U{211d} }%
%BeginExpansion
\mathbb{R}
%EndExpansion
\left[  X\right]  ,$ with real zeros is quite simple; if the zeros are simple
: $a_{1}<a_{2}<a_{3},$ then the segments $\left[  a_{1},a_{2}\right]  $ and
$\left[  a_{3},a_{4}\right]  $ are two short trajectories of $\varpi_{q}$. See
Figure \ref{FIG6}.
\end{remark}

\begin{figure}[tbh]
\begin{minipage}[b]{0.48\linewidth}
\centering\includegraphics[scale=0.5]{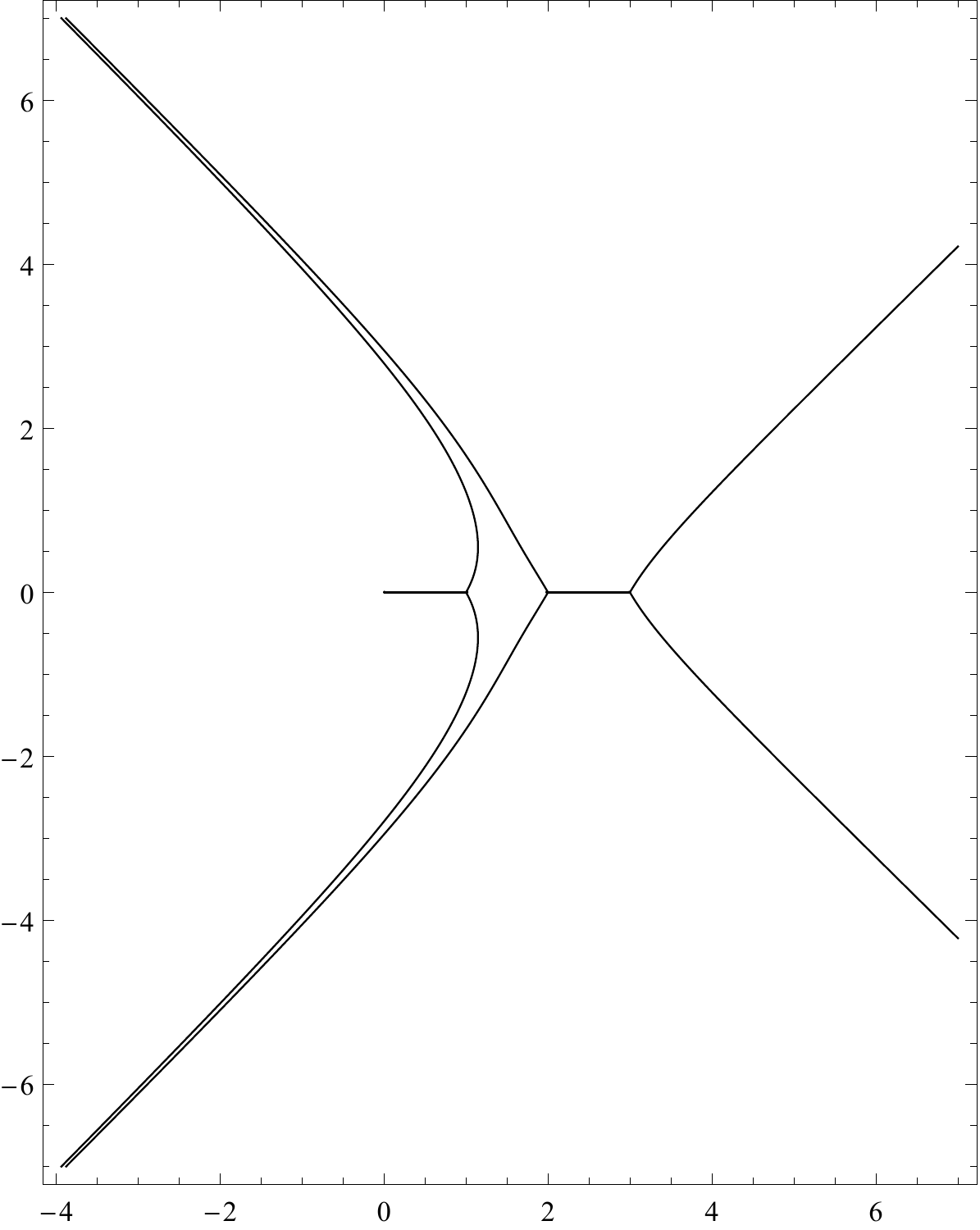}
\end{minipage}\hfill
\begin{minipage}[b]{0.48\linewidth} \includegraphics[scale=0.5]{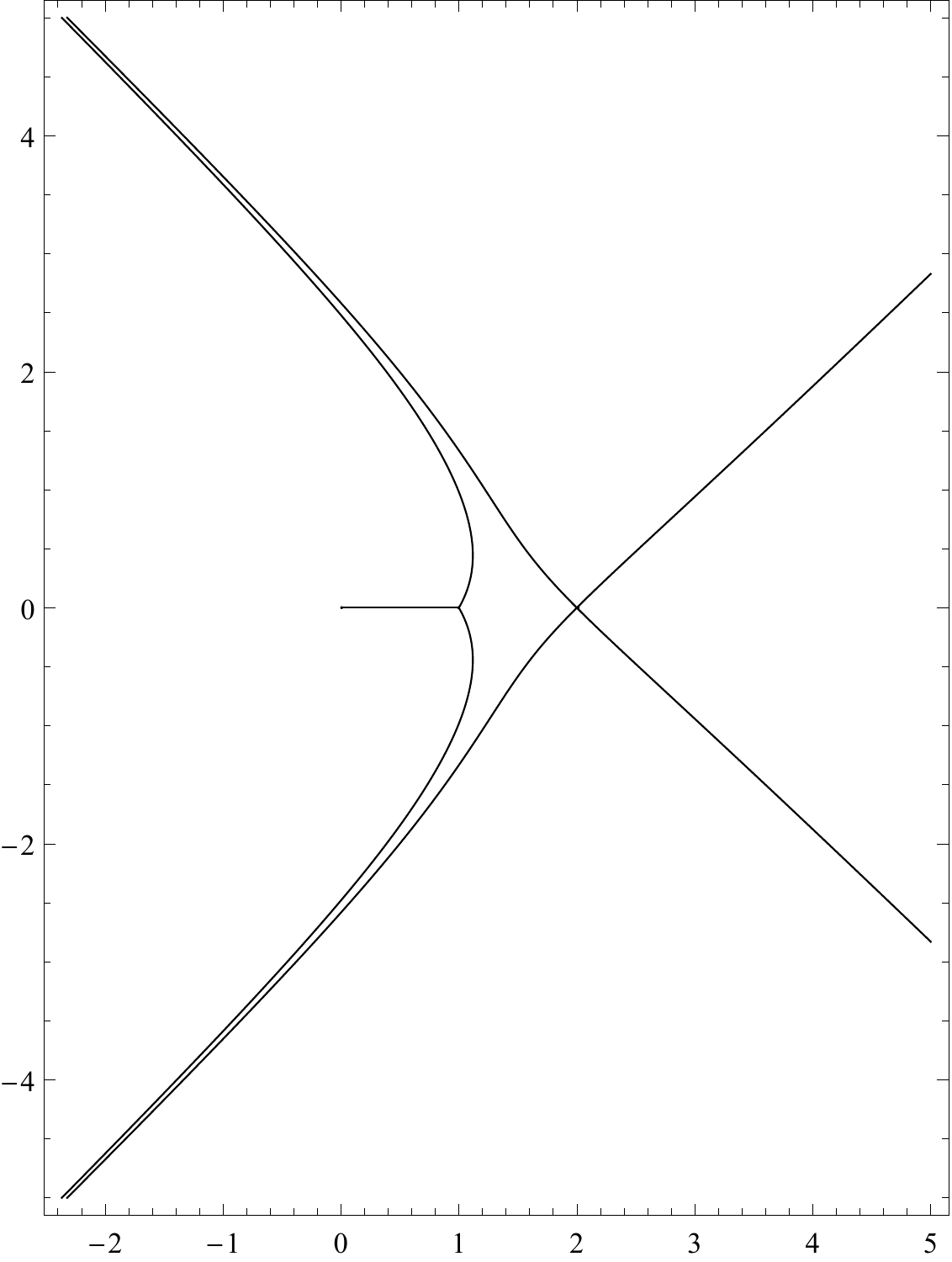}
\end{minipage}
\caption{Critical graphs for $\varpi_{q}$ when $q=\left(  z-1\right)  \left(
z-2\right)  \left(  z-3\right)  $ (left) and $q=\left(  z-1\right)  \left(
z-2\right)  ^{2}$ (right).}%
\label{FIG6}%
\end{figure}\bigskip\bigskip

\section{Connection with the algebraic equation \label{connection alg}}

\bigskip The Cauchy transform $\mathcal{C}_{\nu}$ of a compactly supported
Borelian complex measure $\nu$ is defined in $%
%TCIMACRO{\U{2102} }%
%BeginExpansion
\mathbb{C}
%EndExpansion
\setminus$\emph{supp}$\left(  \nu\right)  $ by :
\[
\mathcal{C}_{\nu}\left(  z\right)  =\int_{%
%TCIMACRO{\U{2102} }%
%BeginExpansion
\mathbb{C}
%EndExpansion
}\frac{d\nu\left(  t\right)  }{z-t}.
\]
It satisfies
\[
\mathcal{C}_{\nu}\left(  z\right)  =\frac{\nu\left(
%TCIMACRO{\U{2102} }%
%BeginExpansion
\mathbb{C}
%EndExpansion
\right)  }{z}+\mathcal{\allowbreak O}\left(  z^{-2}\right)  ,z\rightarrow
\infty,
\]
and the inversion formula (which should be understood in the distributions
sense) :%
\[
\nu=\frac{1}{\pi}\frac{\partial\mathcal{C}_{\nu}}{\partial\overline{z}}.\text{
}%
\]

In particular, the normalized root-counting measure $\nu_{n}=\nu(P_{n})$ of a
complex polynomial $P_{n}$ of a complex polynomial $P_{n}$ of degree $n$ is
defined in $%
%TCIMACRO{\U{2102} }%
%BeginExpansion
\mathbb{C}
%EndExpansion
$ by :
\[
\nu_{n}=\frac{_{1}}{n}\sum\limits_{P_{n}\left(  a\right)  =0}\delta_{a},\text{
(each zero is counted with its multiplicity);}%
\]
the Cauchy transform of $\nu_{n}$ is :
\[
\mathcal{C}_{\nu_{n}}(z)=\int_{%
%TCIMACRO{\U{2102} }%
%BeginExpansion
\mathbb{C}
%EndExpansion
}\frac{d\nu_{n}\left(  t\right)  }{z-t}=\frac{P_{n}^{^{\prime}}(z)}{nP_{n}%
(z)};P_{n}(z)\neq0.
\]

Let us come back to the algebraic equation (\ref{algeq}). We are seeking a
compactly-supported signed measure $\nu$ such that, its Cauchy transform
$\mathcal{C}_{\nu}$ satisfies almost everywhere in $%
%TCIMACRO{\U{2102} }%
%BeginExpansion
\mathbb{C}
%EndExpansion
$ equation (\ref{algeq}).

With the choice of the square root of the discriminant%

\[
\Delta\left(  z\right)  =\frac{z}{4}\left(  4z^{3}+4\gamma z^{2}+\left(
\gamma^{2}-16\right)  z-16\delta\right)
\]
of the quadratic equation (\ref{algeq}) (as a quadratic equation) with
condition
\[
\sqrt{\Delta\left(  z\right)  }\sim z^{2},z\rightarrow\infty,
\]
it is easy to check that independently of the complex numbers $\gamma$ and
$\delta$, we have :
\[
\mathcal{C}(z)=\frac{2z^{2}+\gamma z-2\sqrt{\Delta\left(  z\right)  }}%
{4z}=\frac{1}{z}+\mathcal{\allowbreak O}\left(  z^{-2}\right)  ,z\rightarrow
\infty,
\]
which let us be hopeful for the existence of the measure $\nu.$

The following Lemma gives a sufficient condition on a solution of
(\ref{algeq}) to be the Cauchy transform of some compactly supported measure
in $%
%TCIMACRO{\U{2102} }%
%BeginExpansion
\mathbb{C}
%EndExpansion
$ :

\begin{lemma}
[{\cite[comp. Th. 1.2, Ch. II,]{garnet}}]Suppose $f\in L_{loc}^{1}\left(
%TCIMACRO{\U{2102} }%
%BeginExpansion
\mathbb{C}
%EndExpansion
\right)  $ and that $f(z)\rightarrow0$ as $z\rightarrow\infty$ and let $\mu$
be a compactly-supported measure in $%
%TCIMACRO{\U{2102} }%
%BeginExpansion
\mathbb{C}
%EndExpansion
$ such that%
\[
\mu=\frac{1}{\pi}\frac{\partial f}{\partial\overline{z}}%
\]
in the sense of distributions. Then $f(z)=\mathcal{C}_{\mu}\left(  z\right)  $
almost everywhere in $%
%TCIMACRO{\U{2102} }%
%BeginExpansion
\mathbb{C}
%EndExpansion
.$
\end{lemma}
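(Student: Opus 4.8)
The plan is to show that the difference $g = f - \mathcal{C}_\mu$ agrees almost everywhere with an entire function that vanishes at infinity, and then to invoke Liouville's theorem to conclude $g\equiv 0$. First I would verify that $\mathcal{C}_\mu$ itself lies in $L^1_{loc}(\mathbb{C})$: since the Cauchy kernel $1/\left\vert z\right\vert$ is locally integrable on $\mathbb{C}\cong\mathbb{R}^2$ and $\mu$ is a finite, compactly supported measure, Fubini's theorem gives $\mathcal{C}_\mu\in L^1_{loc}$. Hence $g=f-\mathcal{C}_\mu$ is a well-defined locally integrable function, and in particular a distribution to which $\partial/\partial\overline{z}$ can be applied.

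The key analytic input is that the Cauchy kernel is, up to the constant $\pi$, a fundamental solution of the Cauchy--Riemann operator: one has
\[
\frac{\partial}{\partial\overline{z}}\,\frac{1}{\pi z}=\delta_{0}
\]
in the sense of distributions, where $\delta_0$ is the unit mass at the origin. Applying this identity to $\mathcal{C}_\mu(z)=\int d\mu(t)/(z-t)$ — concretely, by testing against $\varphi\in C_c^\infty(\mathbb{C})$ and using Fubini to interchange the $\mu$-integration with the distributional pairing against $\partial\varphi/\partial\overline{z}$ — yields
\[
\frac{1}{\pi}\frac{\partial\mathcal{C}_\mu}{\partial\overline{z}}=\mu
\]
in $\mathcal{D}'(\mathbb{C})$. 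Combining this with the hypothesis $\mu=\tfrac{1}{\pi}\,\partial f/\partial\overline{z}$ gives $\partial g/\partial\overline{z}=0$ as distributions.

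At this stage the main obstacle is passing from the \emph{distributional} identity $\partial g/\partial\overline{z}=0$ to the statement that $g$ coincides almost everywhere with a genuine holomorphic function. This is exactly the content of Weyl's lemma, i.e. the hypoellipticity of the elliptic operator $\partial/\partial\overline{z}$: any distribution annihilated by $\partial/\partial\overline{z}$ is represented by an entire function. Granting this, I let $g$ denote that entire representative. Because $\mu$ is compactly supported, the asymptotic expansion $\mathcal{C}_\mu(z)=\mu(\mathbb{C})/z+\mathcal{O}(z^{-2})$ forces $\mathcal{C}_\mu(z)\to 0$ as $z\to\infty$; together with the hypothesis $f(z)\to 0$ this gives $g(z)\to 0$ at infinity. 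A bounded entire function tending to zero at infinity is identically zero by Liouville's theorem, so $g\equiv 0$, and therefore $f=\mathcal{C}_\mu$ almost everywhere, as claimed.
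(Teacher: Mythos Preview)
Your argument is correct and is essentially the standard proof of this fact: reduce to $\partial g/\partial\overline{z}=0$ via the fundamental-solution identity for the Cauchy kernel, upgrade to holomorphicity by Weyl's lemma, and kill the entire difference with Liouville. One small point worth making explicit is the passage from ``$f(z)\to 0$'' (stated for an $L^{1}_{loc}$ function) to ``$g(z)\to 0$'' for the entire representative: since $g+\mathcal{C}_{\mu}$ is holomorphic off $\mathrm{supp}(\mu)$ and equals $f$ almost everywhere there, continuity forces the pointwise bound on $f$ near infinity to transfer to $g+\mathcal{C}_{\mu}$, and hence to $g$ because $\mathcal{C}_{\mu}(z)\to 0$. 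With that clarification the proof is complete.

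As for comparison with the paper: there is nothing to compare against. The paper does not prove this lemma; it is quoted verbatim from Garnett (Theorem~1.2, Chapter~II of \cite{garnet}) and used as a black box in Section~\ref{connection alg}. Your write-up therefore supplies a proof the paper itself omits.
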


The following Proposition gives a necessary condition the existence of
measures $\nu$ :

\begin{proposition}
\label{connection}Let us consider the quadratic differential
\begin{equation}
-\frac{\Delta\left(  z\right)  }{z^{2}}dz^{2}. \label{qd2}%
\end{equation}
If the signed measure $\nu$ exists, then, the quadratic differential
(\ref{qd2}) has two short trajectories, and, the support of $\nu$ coincides
with these short trajectories. In particular, if $\Delta\left(  z\right)  $ is
a real polynomial, then the problem of finding the measure $\nu$ is solved.
\end{proposition}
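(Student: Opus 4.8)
The plan is to exploit that, away from its support, any admissible $\mathcal{C}_{\nu}$ must coincide with one branch of the algebraic function defined by (\ref{algeq}), and to read off the support from the branch-point structure of that function. Solving (\ref{algeq}) for $\mathcal{C}$ gives the two branches $\frac{z^{2}+\frac{\gamma}{2}z\pm\sqrt{\Delta(z)}}{2z}$, which collide exactly where $\Delta(z)/z^{2}=\tilde{q}(z)/z$ vanishes or blows up, with $\tilde{q}(z)=z^{3}+\gamma z^{2}+\frac{\gamma^{2}-16}{4}z-4\delta$ the monic cubic for which $-\Delta(z)/z^{2}\,dz^{2}=-\tilde{q}(z)/z\,dz^{2}=\varpi_{\tilde{q}}$. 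Thus (\ref{qd2}) is precisely a quadratic differential of the family $\varpi_{q}$ studied in Section \ref{qudar dif}, and the branch points of $\sqrt{\Delta}/z=\sqrt{\tilde{q}/z}$ are the three simple zeros of $\tilde{q}$ together with the simple pole $z=0$: four points in all, each a finite critical point of $\varpi_{\tilde{q}}$.

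First I would argue that if $\nu$ exists, then $\mathcal{C}_{\nu}$ is single-valued and analytic on $\mathbb{C}\setminus\mathrm{supp}(\nu)$ with $\mathcal{C}_{\nu}(z)=1/z+O(z^{-2})$, so it is exactly the branch selected by $\sqrt{\Delta}\sim z^{2}$. A square root with four branch points becomes single-valued only after the plane is cut along arcs pairing those branch points, and off such cuts the branch is analytic; hence $\mathrm{supp}(\nu)$ is precisely a system of two arcs joining the four finite critical points in pairs. Across each arc the branch of $\sqrt{\Delta}$ switches sign, so the boundary values obey $\mathcal{C}_{\nu,+}-\mathcal{C}_{\nu,-}=\mp\sqrt{\tilde{q}/z}$.

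Next I would feed this jump into the inversion formula $\nu=\frac{1}{\pi}\,\partial\mathcal{C}_{\nu}/\partial\overline{z}$: by the Plemelj relation, the density of $\nu$ with respect to arc length along an arc is, up to a universal constant, $(\mathcal{C}_{\nu,+}-\mathcal{C}_{\nu,-})\,\frac{dz}{|dz|}=\mp\sqrt{\tilde{q}/z}\,\frac{dz}{|dz|}$. Since $\nu$ is a real signed measure, this density must be real, which forces $\sqrt{\tilde{q}(z)/z}\,dz$ to be purely imaginary along each arc, i.e.\ $\Re\int\sqrt{\tilde{q}/z}\,dz=\mathrm{const}$. This is exactly the defining condition (\ref{eq traj}) of a horizontal trajectory of $\varpi_{\tilde{q}}$. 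Because each arc terminates at a branch point, hence at a finite critical point, and an open trajectory arc cannot end at a regular point, each arc is a finite critical (short) trajectory; compactness of $\mathrm{supp}(\nu)$ rules out any escape to $\infty$. Therefore $\varpi_{\tilde{q}}$ has two short trajectories and $\mathrm{supp}(\nu)$ coincides with them.

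For the final assertion, if $\Delta$ is a real polynomial then $\tilde{q}$ has real coefficients, so its zeros are either all real or one real together with a complex-conjugate pair, placing $\varpi_{\tilde{q}}$ squarely in the family analyzed in Section \ref{qudar dif}. By Proposition \ref{main} (and the Remark in the all-real case), the requisite two short trajectories then always exist; setting $\nu$ on them via the jump $\frac{1}{2\pi i}(\mathcal{C}_{\nu,+}-\mathcal{C}_{\nu,-})$ produces a real signed measure, a residue computation at $\infty$ fixes its total mass at $1$, and the cited inversion lemma certifies that its Cauchy transform solves (\ref{algeq}), so the problem is solved. The delicate point throughout is the pairing step: one must verify that the four branch points group into an admissible system of short trajectories with none leaking to infinity, and it is here that Lemma \ref{teich lemma} (Teichm\"{u}ller), the no-recurrence consequence of Jenkins' theorem, and the geometric Lemmas \ref{at infinity}--\ref{asympt of the curve} carry the real weight of the argument.
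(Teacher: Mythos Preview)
Your proposal is correct and follows essentially the same route as the paper's proof. Both argue that single-valuedness of the Cauchy transform off $\mathrm{supp}(\nu)$ forces the support to be a system of cuts carrying all finite critical points of $-\Delta(z)/z^{2}\,dz^{2}$, invoke the Plemelj--Sokhotsky jump to identify the density and hence the horizontal-trajectory condition, conclude that the support consists of short trajectories, and then, in the real case, appeal to Proposition~\ref{main} (and the all-real Remark) for existence together with a residue computation at infinity to verify that the resulting measure has Cauchy transform solving (\ref{algeq}); your version simply makes explicit several steps that the paper compresses into the phrase ``is classic'' and its list of references.
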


\section{Proofs}

\begin{proof}
[Proof of Lemma \ref{at infinity}]Suppose that $\gamma_{1}$ and $\gamma_{2}$
are two such trajectories emanating from the zero $a$ or $1$, spacing with
angle $\theta\in\left\{  2\pi/3,4\pi/3\right\}  .$ Consider the $\varpi_{q}%
$-polygon with edges $\gamma_{1}$ and $\gamma_{2},$ and vertices $z_{j},$ and
infinity. The right side of (\ref{Teich equality}) can take only the values
$0$ or $-1,$ while the left side is at list $2;$ a contradiction. (Observe the
$\varpi_{qp}$-polygon cannot contain the pole $z=0,$ otherwise it contains
$z=1$ and, again we get a contradiction with (\ref{Teich equality}%
).\bigskip\bigskip
\end{proof}

\begin{proof}
[Proof of Lemma \ref{residue}]Since $\frac{q\left(  t\right)  }{t}$ is a real
rational fraction, then
\begin{equation}
\overline{\sqrt{\frac{q\left(  t\right)  }{t}}}=\sqrt{\frac{q\left(
\overline{t}\right)  }{\overline{t}}},t\neq0, \label{ration symm}%
\end{equation}
and we get, after the change of variable $u=\overline{t}$ in second integral
:
\begin{align*}
\Re\left(  \int_{\overline{z}}^{z}\sqrt{\frac{q\left(  t\right)  }{t}%
}dt\right)   &  =\Re\left(  \int_{1}^{z}\sqrt{\frac{q\left(  t\right)  }{t}%
}dt-\int_{1}^{\overline{z}}\sqrt{\frac{q\left(  t\right)  }{t}}dt\right) \\
&  =\Re\left(  \int_{1}^{z}\sqrt{\frac{q\left(  t\right)  }{t}}dt-\overline
{\int_{1}^{z}\sqrt{\frac{q\left(  t\right)  }{t}}dt}\right) \\
&  =\Re\left(  2i\Im\left(  \int_{1}^{z}\sqrt{\frac{q\left(  t\right)  }{t}%
}dt\right)  \right) \\
&  =0.
\end{align*}
Let us give a necessary condition to get two short trajectories joining two
different pairs of finite critical points of $\varpi_{q}$ in the general case
:
\[
\frac{q\left(  z\right)  }{z}=\frac{z^{3}+\alpha z^{2}+\beta z+\gamma}%
{z}=\frac{\left(  z-a\right)  \left(  z-b\right)  \left(  z-c\right)  }%
{z},a,b,c\in%
%TCIMACRO{\U{2102} }%
%BeginExpansion
\mathbb{C}
%EndExpansion
.
\]
Consider two disjoint oriented Jordan arcs $\gamma_{1}$ and $\gamma_{2}$
connecting two distinct pairs of zeros. We define the single-valued function
$\sqrt{\frac{q\left(  z\right)  }{z}}$ in $%
%TCIMACRO{\U{2102} }%
%BeginExpansion
\mathbb{C}
%EndExpansion
\setminus\left(  \gamma_{1}\cup\gamma_{2}\right)  $ with condition
$\sqrt{\frac{q\left(  z\right)  }{z}}\sim z,z\rightarrow\infty.$ For
$s\in\gamma_{1}\cup\gamma_{2},$ we denote by $\left(  \sqrt{p\left(  s\right)
}\right)  _{+}$ and $\left(  \sqrt{p\left(  s\right)  }\right)  _{-}$the
limits from the $+$ and $-$ sides, respectively. (As usual, the $+$ side of an
oriented curve lies to the left and the $-$ side lies to the right, if one
traverses the curve according to its orientation.)

From the Laurent expansion at $\infty$ of $\sqrt{q\left(  z\right)  }:$%

\[
\sqrt{\frac{q\left(  z\right)  }{z}}=\allowbreak z+\frac{\alpha}{2}-\left(
\frac{\alpha^{2}-4\beta}{8z}\right)  +\allowbreak\mathcal{O}\left(
z^{-2}\right)  ,
\]
we deduce the residue%
\[
res_{\infty}\left(  \sqrt{\frac{q\left(  z\right)  }{z}}\right)  =\frac{1}%
{8}\allowbreak\left(  \alpha^{2}-4\beta\right)  .
\]
Let%
\[
I=\int_{\gamma_{1}}\left(  \sqrt{\frac{q\left(  s\right)  }{s}}\right)
_{+}ds+\int_{\gamma_{2}}\left(  \sqrt{\frac{q\left(  s\right)  }{s}}\right)
_{+}ds.
\]
Since
\[
\left(  \sqrt{\frac{q\left(  s\right)  }{s}}\right)  _{+}=-\left(  \sqrt
{\frac{q\left(  s\right)  }{s}}\right)  _{-},s\in\gamma_{1}\cup\gamma_{2},
\]
we have
\[
2I=\int_{\gamma_{1}\cup\gamma_{2}}\left[  \left(  \sqrt{\frac{q\left(
s\right)  }{s}}\right)  _{+}-\left(  \sqrt{\frac{q\left(  s\right)  }{s}%
}\right)  _{-}\right]  ds=\oint_{\Gamma_{q}}\sqrt{\frac{q\left(  z\right)
}{z}}dz,
\]
where $\Gamma_{q}$ is a closed contours encircling the curves $\gamma_{1}$ and
$\gamma_{2}$. After the contour deformation, we pick up the residue at
$z=\infty,$ and we get
\begin{align*}
I &  =\frac{1}{2}\oint_{\Gamma_{q}}\sqrt{\frac{q\left(  z\right)  }{z}}dz=\pm
i\pi res_{\infty}\left(  \sqrt{p\left(  z\right)  }\right)  \\
&  =\pm\frac{\pi i}{8}\allowbreak\allowbreak\left(  \alpha^{2}-4\beta\right)
\end{align*}
and the necessary condition is
\[
\Im\left(  \allowbreak\alpha^{2}-4\beta\right)  =0,
\]
which is satisfied for the case when $q$ is real.
\end{proof}

\begin{proof}
[Proof of Lemma \ref{curve}]It is clear that $\Sigma\cap%
%TCIMACRO{\U{211d} }%
%BeginExpansion
\mathbb{R}
%EndExpansion
=\left[  0,1\right]  .$ The fact that $\Sigma$ is symmetric with respect to
the real axis follows from the observation (\ref{ration symm}). In order to
prove that $\Sigma$ is a curve, we consider the real functions $F$ and $G$
defined for $\left(  x,y\right)  $ in $%
%TCIMACRO{\U{2102} }%
%BeginExpansion
\mathbb{C}
%EndExpansion
_{+}$ by:
\begin{align*}
F\left(  x,y\right)   &  =\Re\left(  \int_{0}^{x}\sqrt{\frac{\left(  u-\left(
x+iy\right)  \right)  \left(  u-\left(  x-iy\right)  \right)  \left(
u-1\right)  }{u}}du\right) \\
&  =\Re\left(  \int_{0}^{x}\sqrt{\frac{\left(  \left(  u-x\right)  ^{2}%
+y^{2}\right)  \left(  u-1\right)  }{u}}du\right)  ;\\
G\left(  x,y\right)   &  =\Re\left(  \int_{x}^{x+iy}\sqrt{\frac{\left(
u-\left(  x+iy\right)  \right)  \left(  u-\left(  x-iy\right)  \right)
\left(  u-1\right)  }{u}}du\right) \\
&  =-\int_{0}^{1}y^{2}\sqrt{1-t^{2}}\Im\sqrt{1-\frac{1}{x+ity}}dt.
\end{align*}
Observe that
\[
\Sigma=\left\{  \left(  x,y\right)  \in%
%TCIMACRO{\U{211d} }%
%BeginExpansion
\mathbb{R}
%EndExpansion
^{2}\mid\left(  F+G\right)  \left(  x,y\right)  =0\right\}  .
\]
We prove first that $\Sigma\setminus\left[  0,1\right]  \subset\left\{  z\in%
%TCIMACRO{\U{2102} }%
%BeginExpansion
\mathbb{C}
%EndExpansion
\mid\Re z>1\right\}  .$ If $x\leq1$ and $y>0,$ then, it is obvious that,
$F\left(  x,y\right)  =0.$ By the other hand, we have for $0<t\leq1$ :%
\begin{align}
0  &  <\arg\left(  x+ity\right)  <\arg\left(  x-1+ity\right)  <\pi
\label{eq arg}\\
&  \Longrightarrow0<\arg\left(  1-\frac{1}{x+ity}\right)  <\pi\Longrightarrow
\arg\sqrt{1-\frac{1}{x+ity}}\in\left]  0,\frac{\pi}{2}\right[ \nonumber\\
&  \Longrightarrow\Im\sqrt{1-\frac{1}{x+ity}}>0\Longrightarrow G\left(
x,y\right)  <0.\nonumber
\end{align}
Hence, $\left(  F+G\right)  \left(  x,y\right)  <0$ which proves that $\left(
x,y\right)  \notin\Sigma.$

Let us prove now that $\Sigma$ is a curve in the set
\[
\Pi=\left\{  \left(  x,y\right)  ;x>1,y>0\right\}  .
\]
We have
\[
\frac{\partial F}{\partial x}\left(  x,y\right)  =\sqrt{\frac{y^{2}\left(
x-1\right)  }{x}}+\int_{1}^{x}\frac{\left(  x-u\right)  \left(  u-1\right)
}{\sqrt{\left(  \left(  u-x\right)  ^{2}+y^{2}\right)  \left(  u-1\right)  u}%
}dt>0.
\]
By the other hand, with $u_{t}=x+ity,$ $t\in\left[  0,1\right]  ,$ we get
\begin{align*}
\frac{\partial G}{\partial x}\left(  x,y\right)   &  =\frac{\partial}{\partial
x}\left[  \Re\left(  \int_{0}^{1}iy^{2}\sqrt{1-t^{2}}\sqrt{1-\frac{1}{u_{t}}%
}dt\right)  \right] \\
&  =-\int_{0}^{1}\frac{y^{2}\sqrt{1-t^{2}}}{2}\Im\left(  \frac{1}{u_{t}%
^{2}\sqrt{1-\frac{1}{u_{t}}}}\right)  dt
\end{align*}
It is sufficient to prove that,
\[
\forall t\in\left[  0,1\right]  ,\Im\left(  \frac{1}{u_{t}^{2}\sqrt{1-\frac
{1}{u_{t}}}}\right)  \leq0,
\]
which is equivalent to prove that,%
\[
\forall t\in\left[  0,1\right]  ,\arg\left(  \frac{1}{u_{t}^{2}\sqrt
{1-\frac{1}{u_{t}}}}\right)  \in\left[  \pi,2\pi\right[  ,
\]
where the argument is taken in $\left[  0,2\pi\right[  $. It follows from
(\ref{eq arg}) that for any $t\in\left[  0,1\right]  $ :%
\[
\arg\left(  \frac{1}{u_{t}^{2}\sqrt{1-\frac{1}{u_{t}}}}\right)  =2\pi-\left(
\frac{3}{2}\arg\left(  u_{t}\right)  +\frac{1}{2}\arg\left(  u_{t}-1\right)
\right)  \in\left]  \pi,2\pi\right[  .
\]
We deduce that for any $0\leq t\leq1,$ $\Im\left(  \frac{1}{u_{t}^{2}%
\sqrt{1-\frac{1}{u_{t}}}}\right)  \leq0,$ and then $\frac{\partial G}{\partial
x}\left(  x,y\right)  \geq0.$ Finally, we just proved that
\[
\frac{\partial\left(  F+G\right)  }{\partial x}\left(  x,y\right)
\neq0,\left(  x,y\right)  \in\Sigma\cap\Pi.
\]
We conclude that the set $\Sigma$ is a curve in $%
%TCIMACRO{\U{2102} }%
%BeginExpansion
\mathbb{C}
%EndExpansion
$ by $\allowbreak$applying the Implicit Function Theorem to the function $F+G$.
\end{proof}

\begin{proof}
[Proof of Lemma \ref{asympt of the curve}]Let us put $z=re^{ix}\in
\Sigma,r>1,x\in\left[  0,\frac{\pi}{2}\right]  .$ With the change of variable
$t=sre^{ix}$, we get%
\[
\Re\left(  e^{2ix}\int_{0}^{1}\sqrt{\frac{\left(  s-\frac{1}{r}e^{-ix}\right)
\left(  s-1\right)  \left(  s-e^{-2ix}\right)  }{s}}ds\right)  =0.
\]
Taking the limits when $r\rightarrow\infty,$ we get%
\begin{equation}
0=\Re\int_{0}^{1}e^{2ix}\sqrt{\left(  s-1\right)  \left(  s-e^{-2ix}\right)
}. \label{0}%
\end{equation}
We see that $x\neq0;$ suppose that $x\neq\frac{\pi}{2}.$ With the change of
variable $t=\alpha u+\beta,$ where%
\[
\beta=\frac{1+e^{-2ix}}{2},\alpha=i\frac{1-e^{-2ix}}{2},
\]
(\ref{0}) gives%
\begin{align*}
0  &  =\Re\left(  \int_{\cot x}^{i}\sqrt{u^{2}+1}du\right)  =\Re\left(
\int_{\cot x}^{0}\sqrt{u^{2}+1}du+\int_{0}^{i}\sqrt{u^{2}+1}du\right) \\
&  =\Re\left(  \int_{\cot x}^{0}\sqrt{u^{2}+1}du\right)  >0;
\end{align*}
a contradiction.

The Laurent serie of $\sqrt{\frac{\left(  t-1\right)  \left(  t-z\right)
\left(  t-\overline{z}\right)  }{t}}$ when $t\rightarrow1$ is :%
\[
\sqrt{\frac{\left(  t-1\right)  \left(  t-z\right)  \left(  t-\overline
{z}\right)  }{t}}=\left\vert z-1\right\vert \allowbreak\sqrt{t-1}+o\left(
\left(  t-1\right)  ^{\frac{1}{2}}\right)  .
\]
We conclude that
\[
0=\lim\limits_{z\rightarrow1,z\in\Sigma^{+}}\Re\int_{1}^{z}\sqrt{\frac{\left(
t-1\right)  \left(  t-z\right)  \left(  t-\overline{z}\right)  }{t}}%
dt=\frac{2}{3}\left\vert z-1\right\vert \Re\left(  z-1\right)  ^{\frac{3}{2}%
},
\]
and then%
\[
\arg\left(  z-1\right)  ^{\frac{3}{2}}\equiv\frac{\pi}{2}\operatorname{mod}%
\left(  \pi\right)  ,
\]
which finishes the proof.
\end{proof}

\begin{proof}
[Proof of Proposition \ref{main}]It is clear that the segment $\left[
0,1\right]  $ is always a short trajectory of $\varpi_{q}$. If $a\notin%
\Sigma,$ then, from (\ref{cond necess})there is no short trajectory connecting
$a$ to $0$ or $1.$ From Lemma \ref{at infinity}, at most two critical
trajectories emanating from $a$ can diverge to $\infty$ in the upper
half-plane $%
%TCIMACRO{\U{2102} }%
%BeginExpansion
\mathbb{C}
%EndExpansion
^{+}.$ By consideration of symmetry with respect to the real axis, at list one
critical trajectory emanating from $a$ meets a critical trajectory emanating
from $\overline{a},$ at some point $b\in%
%TCIMACRO{\U{211d} }%
%BeginExpansion
\mathbb{R}
%EndExpansion
\setminus\left[  0,1\right]  .$ Since $b$ cannot be a zero of the quadratic
differential $\varpi_{q},$ we conclude that these two critical trajectories
form a short one.

If $a\in\Sigma$, and no short trajectory connecting $a$ to $z=1,$ then, there
exist two critical trajectories $\gamma_{a}$ and $\gamma_{1}$ emanating
respectively from $a$ and $1$ and diverging to infinity in a same direction
$D_{k}$. \bigskip From the behaviour of orthogonal trajectories at $\infty,$
we can take an orthogonal trajectory $\sigma$ that hits $\gamma_{1}$ and
$\gamma_{a}$ respectively in two points $b$ and $c$ (there are infinitely many
such orthogonal trajectories $\sigma$ ). We consider a path $\gamma$
connecting $z=1$ and $a$ formed by the part of $\gamma_{1}$ from $z=1$ to $b,$
the part of $\sigma$ from $b$ to $c,$ and the part of $\gamma_{a}$ from $c$ to
$a.$ Then
\begin{align*}
\Re\int_{\gamma}\sqrt{p\left(  t\right)  }dt  &  =\Re\int_{1}^{b}%
\sqrt{p\left(  t\right)  }dt+\Re\int_{b}^{c}\sqrt{p\left(  t\right)  }%
dt+\Re\int_{c}^{a}\sqrt{p\left(  t\right)  }dt\\
&  =\Re\int_{b}^{c}\sqrt{p\left(  t\right)  }dt\neq0,
\end{align*}
which contradicts the fact $a\in\Sigma.$
\end{proof}

\begin{proof}
[Proof of Proposition \ref{connection}]The fact that the support of $\nu$ is
formed by horizontal trajectories of the quadratic differential (\ref{qd2}) is
classic and it is based on the so-called Plemelj-Sokhotsky Formula. For more
details, we refer the reader to \cite{amf rakh1},\cite{pritsker}%
,\cite{Shapiro},\cite{bullgard}...

Since the Cauchy transform is a single-valued function in $%
%TCIMACRO{\U{2102} }%
%BeginExpansion
\mathbb{C}
%EndExpansion
\setminus$\emph{supp}$\left(  \nu\right)  ,$ then, the support of $\nu$ should
include all the singular points (finite critical points) of the quadratic
differential (\ref{qd2}). But, $\allowbreak$the horizontal trajectories that
contain all finite critical points are exactly short trajectories. The measure
$\nu$ is absolutely continuous with respect to the linear Lebesgue measure,
and it is given on its support (with an adequate orientation) by the
expression :
\[
d\nu\left(  t\right)  =\frac{1}{8i\pi}\left(  \sqrt{\frac{\Delta\left(
t\right)  }{t}}\right)  _{+}dt.
\]
It is easy to check that the Cauchy transform of $\nu$ satisfies
(\ref{algeq}), indeed :
\begin{align*}
C_{\nu}\left(  z\right)   &  =\frac{1}{2i\pi}\int\frac{\left(  \sqrt
{\frac{\Delta\left(  t\right)  }{t}}\right)  _{+}}{4\left(  z-t\right)
}dt=\frac{1}{4i\pi}\oint\frac{\sqrt{\frac{\Delta\left(  t\right)  }{t}}%
}{4\left(  z-t\right)  }dt\\
&  =\frac{1}{4}\left(  res_{z}\frac{\sqrt{\frac{\Delta\left(  t\right)  }{t}}%
}{z-t}-res_{\infty}\frac{\sqrt{\frac{\Delta\left(  t\right)  }{t}}}%
{z-t}\right) \\
&  =\frac{-\sqrt{\frac{\Delta\left(  z\right)  }{z}}+\gamma+2z}{4}%
=\frac{-\sqrt{z\Delta\left(  z\right)  }+\gamma z+2z^{2}}{4z},
\end{align*}
where the path of integration in the first integral is formed by the two short
trajectories, and, in the second integral is a closed contour including the
two short trajectories and far away from $z.$
\end{proof}

\begin{acknowledgement}
\bigskip\ This work was partially supported by the laboratory research
"Mathematics and Applications", Faculty of sciences of Gab\`{e}s. Tunisia.
\end{acknowledgement}

\texttt{Mohamed Jalel Atia(jalel.atia@gmail.com)}

\texttt{Wafaa Karrou(Foufa40@hotmail.com)}

\texttt{Mondher Chouikhi(chouikhi.mondher@gmail.com)}

\texttt{Faouzi Thabet(faouzithabet@yahoo.fr)}

\texttt{Department of mathematics, Facult\'{e} des sciences de Gab\`{e}s, }

\texttt{Cit\'{e} Erriadh Zrig 6072}. \texttt{Gab\`{e}s. Tunisia.}

\bigskip
\end{document}